\renewcommand*{\@fnsymbol}[1]{\ensuremath{\ifcase#1\or *\or \dagger\or \ddagger\or
   \mathsection\or \mathparagraph\or \|\or **\or \dagger\dagger
   \or \ddagger\ddagger \else\@ctrerr\fi}}
\newtheorem{theorem}{Theorem}[section]
\newtheorem{lemma}[theorem]{Lemma}
\newtheorem{assumption}[theorem]{Assumption}
\newtheorem*{notation}{Notation}
\theoremstyle{definition}
\newtheorem{example}[theorem]{Example}
\newtheorem{remark}[theorem]{\textbf{Remark}}
\numberwithin{equation}{section}
\DeclareMathOperator{\argmin}{argmin}
\numberwithin{equation}{section}
\numberwithin{theorem}{section}
\def\@fnsymbol#1{\ensuremath{\ifcase#1\or *\or *\or *\ or \mathparagraph\or \|\or **\or \dagger\dagger
   \or \ddagger\ddagger \else\@ctrerr\fi}}
\begin{document}
\title{On the Mean-Field limit of diffusive games through the master equation: $L^{\infty}$ estimates and extreme value behavior.}

\author{Erhan Bayraktar\thanks{Department of Mathematics, University of Michigan, \url{erhan@umich.edu}.} \thanks{Funded in part by the NSF though DMS-2106556 and by the Susan M. Smith Chair.}
\and
Nikolaos Kolliopoulos\thanks{Department of Mathematics, University of Michigan, \url{nkolliop@umich.edu}.} 
\thanks{Department of Mathematics and Statistics, University of Cyprus, \url{Kolliopoulos.Nikolaos@ucy.ac.cy}.} 
\thanks{Funded by the NSF through DMS-2406232.}
}
\maketitle

\begin{abstract}
We consider an $N$-player game where the states of the players evolve with time as Stochastic Differential Equations (SDEs) with interaction only in the drift terms. Each player controls the drift of the SDE satisfied by her state process, aiming to minimize the expected value of a cost that depends on the paths of the player's state and the empirical measure of the states of all the players until a terminal time. When $N \to \infty$, previous works have established Central Limit Theorems and Large Deviation Principles for the state processes when the game is in Nash Equilibrium (the Nash states), by using the Master Equation to construct approximations of those processes that evolve with time as SDEs with classical Mean-Field interaction. Staying in this framework, we improve an existing $L^{1}$ estimate for the total error of approximating all the Nash states to an $L^{\infty}$ one, and we also establish the $N \to \infty$ asymptotic behavior of the upper order statistics of the Nash states. The latter initiates the development of an Extreme Value Theory for Stochastic Differential Games. 
\end{abstract}

\section{Introduction}
In this paper we establish an $L^{\infty}$ estimate for the error of an efficient approximation of a large Stochastic Differential Game under Nash equilibrium, and also a fundamental result of Extreme Value Theory for the states of the players when their number grows towards infinity. We work with a game of $N$ players on some filtered probability space $(\Omega, \mathcal{F}, (\mathcal{F}_t)_{t \in [0, T]}, \mathbb{P})$, where for each $i~\in~\{1, 2, \ldots, N\}$, the evolution in time of the $i$-th player's state is modeled as a diffusive process $X^{i, N, v^N} = (X_t^{i, N, v^N})_{t \in [0, T]}$ that satisfies
\begin{align}\label{system}
X_t^{i, N, v^N} = X_0^i + \int_0^t b\left(X_s^{i, N, v^N}, \mu_s^{N, v^N}, v_s^{i, N} \right)ds + \sigma W_t^i, \qquad t \geq 0,
\end{align}
where $\mu_t^{N, v^N}$ denotes the empirical measure of the $N$ players:
\begin{equation}\label{cost}
\mu_t^{N, v^N} = \frac{1}{N}\sum_{\ell = 1}^N\delta_{X_t^{\ell, N, v^N}},
\end{equation}
and $v^{i, N} = (v_t^{i, N})_{t \in [0, T]}$ is an $\mathcal{F}_t$-adapted process which represents the $i$-th player's strategy and takes values in some set $V \subset \mathbb{R}$ - we write $\mathcal{V}_N$ for the set from which each $v^{i,N}$ is picked. In the above, $(X_0^i)_{i \in \mathbb{N}}$ and $(W^i)_{i \in \mathbb{N}}$ are independent sequences of independent $\mathcal{F}_0$-measurable random variables with law $\mu_0 \in \mathcal{P}(\mathbb{R})$ and independent $\mathcal{F}_t$-adapted standard Brownian motions respectively, while $b: \mathbb{R} \times \mathcal{P}(\mathbb{R}) \times V \mapsto \mathbb{R}$ is a sufficiently regular function and $\sigma$ is a positive constant. Here, $\mathcal{P}(E)$ denotes the set of probability measures on a Banach space equipped with the 1-Wasserstein metric:
\begin{equation*}
\mathcal{W}_1(m_1, m_2) = \inf_{\substack{m_{1,2}(\cdot, E) = m_1 \\ m_{1,2}(E, \cdot) = m_2}}\int_{E \times E}\Vert x - y\Vert_{E}m_{1,2}(dx, dy)
\end{equation*}
In both $X_t^{i, N, v^N}$ and $\mu_t^{N, v^N}$, the superscript $v^N$ denotes the dependence of the states and the empirical measure on the vector of strategy processes $v^N = (v^{1, N}, v^{2, N}, \ldots, v^{N, N})  \in \mathcal{V}_N^N$. In this game, the $i$-th player wants to pick $v^{i, N}$ in a way that minimizes her expected cost:
\begin{align}\label{objectivequantity}
J^{i, N}\left(v^{N}\right) = \mathbb{E}\left[\int_0^Tf\left(X_s^{i, N, v^N}, \mu_s^{N, v^N}, v_s^{i, N}\right)ds + g\left(X_T^{i, N, v^N}, \mu_T^{N, v^N}\right)\right]
\end{align}
where $f: \mathbb{R} \times \mathcal{P}(\mathbb{R}) \times V \mapsto \mathbb{R}$ and $g: \mathbb{R} \times \mathcal{P}(\mathbb{R}) \mapsto \mathbb{R}$ are the running and terminal cost functions respectively. Then, two problems that naturally occur are: (a) the search of Nash equilibria, i.e. choices $v^{N, *} = (v^{1, N, *}, v^{2, N, *}, \ldots, v^{N, N, *})$ for $v^{N}$ that satisfy:
\begin{align}
&J^{i, N}\left(v^{N,*}\right) \leq J^{i, N}\left(v^{1, N,*}, v^{2, N,*}, \ldots, v^{i-1, N,*}, v^{i, N}, v^{i+1, N,*}, \ldots, v^{N, N,*}\right)
\end{align}
for any $i \in \{1, 2, \ldots, N\}$ and any process $v^{i, N} \in \mathcal{V}_N$; (b) the analysis of the system \eqref{system} under a Nash equilibrium $v^{N, *}$ where the $i$-th player picks $v^{i, N} = v^{i, N, *}$ for each $i \in \{1, 2, \ldots, N\}$, including the study of this regime as $N \to \infty$ to understand player dynamics in large population games. The study of a systemic risk model that falls into the above framework can be found in \cite{CFS15}, while variations of our setup arise in other financial contexts as well, see e.g \cite{LASO20, Carmona2020ApplicationsOM, HUZA, FU23, LAZA, ZA24}. In this work we restrict to the case where $v^{N,*}$ is a closed-loop Nash equilibrium, in which case each $v^{i,N,*}$ is picked from a set $\mathcal{V}_N$ of processes that are deterministic functions of the states of the players, i.e:
$$\mathcal{V}_N = \{v^{i,N}: v_t^{i,N} = h_t^{i,N}(X_t^{1, N, v^N}, X_t^{2, N, v^N}, \ldots, X_t^{N, N, v^N}) \quad \text{for} \quad h_t^{i,N}: \mathbb{R}^N \mapsto V, \quad \forall t\in [0, T]\}.$$

Sufficient conditions for the existence of a closed-loop Nash equilibrium $v^{N,*}$ for the Stochastic Differential Game \eqref{system} - \eqref{cost} can be found in \cite{CDLL19}, in which case each $v^{i,N,*}$ admits the form
\begin{align}\label{Nplayeropt}
v_t^{i, N, *} = \hat{v}\left(X_t^{i, N, v^{N, *}}, \mu_t^{N, v^{N, *}}, U_{x_i}^{i, N}\left(t, X_t^{1, N, v^{N, *}}, X_t^{2, N, v^{N, *}}, \ldots, X_t^{N, N, v^{N, *}}\right)\right),
\end{align}
where for any $x \in \mathbb{R}$, $y \in \mathbb{R}$ and $\mu \in \mathcal{P}(\mathbb{R})$ we write
\begin{align}
\hat{v}(x, \mu, y) = \displaystyle{\argmin_{v \in V}}\left\{b(x, \mu, v)y + f(x, \mu, v)\right\},
\end{align}
and $U^{1, N}, U^{2, N}, \ldots, U^{N,N}$ are the value functions of the $N$ players which satisfy the system of nonlinear PDEs
\begin{align}\label{sysPDE}
0 = U_t^{i,N}(t, \mathbf{x}) &+ \displaystyle{\min_{v \in V}}\left\{b\left(x_i, \mu_{\mathbf{x}}^N, v\right)U_{x_i}^{i,N}(t, \mathbf{x}) + f\left(x_i, \mu_{\mathbf{x}}^N, v\right)\right\} \nonumber \\
&+ \sum_{\substack{j = 1 \\ j \neq i}}^NU_{x_{j}}^{i,N}(t, \mathbf{x}) \cdot b\left(x_{j}, \mu_{\mathbf{x}}^N, \hat{v}\left(x_{j}, \mu_{\mathbf{x}}^N, U_{x_{j}}^{j, N}(t, \mathbf{x})\right)\right) \nonumber \\
&+ \frac{\sigma^2}{2}\sum_{j = 1}^NU_{x_{j}x_{j}}^{i, N}(t, \mathbf{x}),
\end{align}
along with terminal conditions 
\begin{equation}\label{terminalconditions1}
U^{i,N}(T, \mathbf{x}) = g\left(x_i, \mu_{\mathbf{x}}^N\right),
\end{equation}
in which $\mu_{\mathbf{x}}^N := \frac{1}{N}\sum_{\ell = 1}^N\delta_{x_{\ell}}$ for $\mathbf{x} = (x_1, x_2, \ldots, x_N)$. Thus, the evolution of the states of the $N$ players under Nash equilibrium is given by
\begin{align}\label{systemNash}
X_t^{i, N, v^{N,*}} = X_0^i + \int_0^t\hat{b}\left(X_s^{i, N, v^{N,*}}, \mu_s^{N, v^{N,*}}, U_{x_i}^{i, N}\left(s, X_s^{1, N, v^{N, *}}, X_s^{2, N, v^{N, *}}, \ldots, X_s^{N, N, v^{N, *}}\right)\right)ds + \sigma W_t^i
\end{align}
for $t \geq 0$, where we write: 
\begin{align}\label{minimized}
\hat{b}\left(x, m, y\right) =  b\left(x, m, \hat{v}\left(x, m, y\right)\right).
\end{align}

It is generally expected that the $N \to \infty$ asymptotics of $N$-player Stochastic Differential Games are governed by a property known as Propagation of Chaos, as happens with uncontrolled diffusions with standard Mean-Field interaction \cite{MR221595,MR1108185,MR968996}. In the game framework, this property is summarized as follows:
\begin{enumerate}
    \item The empirical measure $\displaystyle{\mu^{N,v^{N,*}} := \frac{1}{N}}\sum_{\ell = 1}^N\delta_{X^{\ell, N, v^{N,*}}}$ on $C[0, T]$ satisfies
    \begin{equation}\label{propchaos2}
    \mu^{N,v^{N,*}} \to \mathcal{L}(X^{v^{*}})
    \end{equation} 
    weakly as $N \to \infty$, where $X^{v} = (X_t^{v})_{t \in [0, T]}$ denotes the state process of a representative player in the limiting regime as $N \to \infty$ - which is frequently called "Mean-Field Game" - under an $\mathcal{F}_t$-adapted strategy process:
$$v = (v_t)_{t \in [0, T]} \in \mathcal{V}_{\infty} := \{v: v_t = h_t\left(X_t^v\right) \quad \text{for} \quad h_t: \mathbb{R} \mapsto V, \quad \forall t \in [0,T]\},$$
$\mathcal{L}(X^{v})$ being the law of the $C[0,T]$-valued random path $X^v = (X_t^{v})_{t \in [0,T]}$ and $v^{*} = (v_t^{*})_{t \in [0, T]}$ being the optimal strategy process for the representative player.
    \item For any fixed $k \in \mathbb{N}$ it holds that: 
    \begin{equation}\label{propchaos1}
    \left(X_t^{1,N,v^{N,*}}, X_t^{2,N,v^{N,*}}, \ldots, X_t^{k,N,v^{N,*}}\right) \to \left(X_t^{1,v^{*}}, X_t^{2,v^{*}}, \ldots, X_t^{k,v^{*}}\right)
    \end{equation} 
    in distribution as $N \to \infty$, with $X^{1,v^*}, X^{2,v^*}, \ldots, X^{k,v^*}$ being independent copies of $X^{v^*}$.
\end{enumerate}
For a standard Brownian motion $(W_t)_{t \in [0, T]}$ and a random variable $X_0$ with law $\mu_0$, the evolution of $(X^{v^{*}}, v^{*})$ is given by
\begin{align}\label{repagent0}
X_t^{v^{*}} &= X_0 + \int_0^tb\left(X_s^{v^{*}}, \mathcal{L}(X_s^{v^{*}}) , v_s^{*}\right)ds + \sigma W_t \nonumber \\
v^{*} &= \argmin_{v \in \mathcal{V}_{\infty}}\mathbb{E}\left[\int_0^Tf\left(X_s^{v}, \mathcal{L}(X_s^{v^{*}}), v_s\right)ds + g\left(X_T^{v}, \mathcal{L}(X_T^{v^{*}})\right)\right].
\end{align}
The derivation of Propagation of Chaos for a class of Stochastic Differential Games can be found in \cite{CDLL19}. As shown in \cite{LA23} through a simple example of uncontrolled diffusions, \eqref{propchaos1} is generally not true for $k = N$ in any sense and, for that reason, the asymptotic relation
\begin{equation}\label{HpropChaos}
H^N\left(X^{1,N,v^{N,*}}, X^{2,N,v^{N,*}}, \ldots, X^{k,N,v^{N,*}}\right) \sim H^N\left(X^{1,v^{*}}, X^{2,v^{*}}, \ldots, X^{k,v^{*}}\right) \,\,\,\, \text{as} \,\,\,\, N \to \infty
\end{equation}
holds in some sense only for a very limited class of functions $H^N$ with $N$ arguments; \eqref{propchaos2} is essentially a Law of Large Numbers for the interacting variables $X^{i,N,v^{N,*}}$, with the corresponding Central Limit Theorems and Large Deviation Principles being also available in the literature \cite{Delarue2018FromTM, DLR20}, and all these results can be seen as \eqref{HpropChaos} for appropriate choices of $H^N$. For example, the Law of Large Numbers \eqref{propchaos2} is precisely \eqref{HpropChaos} for $H^N(x_1,x_2,\ldots,x_N) = \frac{1}{N}\sum_{i=1}^N\delta_{x_i}$, since the left-hand side of the latter is precisely $\mu^{N,v^{N,*}}$ and its right-hand side converges to $\mathcal{L}(X^{v^*})$ as $N \to \infty$ by the Law of Large Numbers for independent and identically distributed (i.i.d.) random variables.

It is also shown in \cite{CDLL19} that the $N \to \infty$ limit of our setup is governed by the Master equation, which is the following nonlinear $PDE$ on $[0, T] \times \mathbb{R} \times \mathcal{P}(\mathbb{R})$:

\begin{align}\label{master}
0 = U_t(t, x, m) &+  \displaystyle{\min_{v \in V}}\left\{b\left(x, m, v\right)U_{x}(t, x, m) + f\left(x, m, v\right)\right\} \nonumber \\
&+ \frac{\sigma^2}{2}U_{xx}(t, x, m) +\int_{\mathbb{R}}U_{m}(t, x, m, z_1) \times b\left(z_1, m, \hat{v}\left(z_1, m, U_{x}(t, z_1, m)\right)\right)m(dz_1) \nonumber \\
&+ \frac{\sigma^2}{2}\int_{\mathbb{R}}U_{mz}(t, x, m, z_1)m(dz_1)
\end{align}
under the terminal condition $U(T, x, m) = g(x,m)$, with $U_m$ being our notation for the derivative with respect to $m \in \mathcal{P}(\mathbb{R})$ which is defined in e.g \cite{CD1,CDLL19,DLR20} (all notations are introduced in Section~2). In some sense, \eqref{master} plays the role of the $N \to \infty$ limit of \eqref{sysPDE}, and the analogue of \eqref{Nplayeropt} in the limit that gives the optimal strategy process $v^*$ is:
\begin{align}\label{optcontrollimit}
v_t^{*} = \hat{v}\left(X_t^{v^{*}}, \mathcal{L}(X_t^{v^{*}}), U_{x}\left(t, X_t^{v^{*}}, \mathcal{L}(X_t^{v^{*}})\right)\right),
\end{align}
so the controlled McKean-Vlasov SDE \eqref{repagent0} that describes the evolution of the representative player in the $N \to \infty$ limit of the game becomes:
\begin{align}\label{repagent}
X_t^{v^{*}} &= X_0 + \int_0^t\hat{b}\left(X_s^{v^{*}}, \mathcal{L}(X_s^{v^{*}}), U_{x}\left(s, X_s^{v^{*}}, \mathcal{L}(X_s^{v^{*}})\right)\right)ds + \sigma W_t.
\end{align}
To obtain a Central Limit Theorem and a Large Deviations Principle for the Nash states $X_t^{i, N, v^{N,*}}$ in \cite{Delarue2018FromTM} and \cite{DLR20} respectively, the authors utilize that when $N$ is large, the system \eqref{systemNash} satisfied by the Nash states $X^{i,N,v^{N,*}}$ is sufficiently close to the following simpler system:
\begin{equation}
\begin{aligned}\label{approxsys1}
\bar{X}_t^{i,N} &= X_0^i + \int_0^t\hat{b}\left(\bar{X}_s^{i,N}, \bar{\mu}_s^{N}, U_{x}\left(s, \bar{X}_s^{i,N}, \bar{\mu}_s^{N}\right) \right)ds + \sigma W_t^i, \\
\bar{\mu}_t^{N} &= \frac{1}{N}\sum_{\ell = 1}^N\delta_{\bar{X}_t^{\ell,N}} 
\end{aligned}
\end{equation}
for $t \in [0,T]$. Specifically, \eqref{approxsys1} describes a standard Mean-Field System that is easier to handle compared to \eqref{systemNash}, and its components $\bar{X}_t^{i,N}$ constitute a significantly better approximation of the Nash states $X^{i,N,v^{N,*}}$ compared to the i.i.d. processes $X^{i,v^{*}}$. The latter allowed the authors of \cite{Delarue2018FromTM, DLR20} to obtain the strong $L^1(\Omega)$-estimate
\begin{align}\label{expestimate}
&\Bigg\Vert\sum_{i=1}^N\Bigg\{\hat{b}\left(X_t^{i, N, v^{N,*}}, \mu_t^{N, v^{N,*}}, U_{x_i}^{i, N}\left(t, X_t^{1, N, v^{N, *}}, X_t^{2, N, v^{N, *}}, \ldots, X_t^{N, N, v^{N, *}}\right)\right) \nonumber \\
& \qquad\qquad\qquad\qquad\qquad - \hat{b}\left(X_t^{i, N, v^{N,*}}, \mu_t^{N, v^{N,*}}, U_{x}\left(t, X_t^{i, N, v^{N,*}}, \mu_t^{N, v^{N,*}}\right) \right)\Bigg\}^2\Bigg\Vert_{L^1(\Omega)} \leq \frac{C}{N},
\end{align}
which is a $\mathcal{O}(N^{-1})$ bound for the relative entropy that arises when we apply Girsanov's theorem to transform the law of $(X^{1,N,v^{N,*}}, X^{2,N,v^{N,*}}, \ldots, X^{N,N,v^{N,*}})$ into that of $(\bar{X}_t^{1,N}, \bar{X}_t^{2,N}, \ldots, \bar{X}_t^{N,N})$, and as $N \to \infty$, it is the fast decay of this entropy that allows for existing asymptotic results for the standard Mean-Field System of the processes $\bar{X}^{i,N}$ to be extended to the Nash states $X^{i,N,v^{N,*}}$, including a Central Limit Theorem and a Large Deviations Principle. More generally, using Pinsker's inequality and the entropy bound \eqref{expestimate}, the derivation of the asymptotic relation \eqref{HpropChaos} reduces to showing that
\begin{equation}\label{HpropChaos2}
H^N\left(\bar{X}_t^{1,N}, \bar{X}_t^{2,N}, \ldots, \bar{X}_t^{N,N}\right) \sim H^N\left(X_t^{1,v^{*}}, X_t^{2,v^{*}}, \ldots, X_t^{k,v^{*}}\right) \,\,\,\, \text{as} \,\,\,\, N \to \infty
\end{equation}
whenever $H^{N}$ is simply measurable and bounded. Then, since \eqref{repagent} is the Mckean-Vlasov SDE that captures the $N \to \infty$ behavior of both systems \eqref{systemNash} and \eqref{approxsys1}, the latter is actually the reduction of the establishment of an arbitrary propagation of chaos property for the Nash states to obtaining the same property for a standard system of uncontrolled diffusions with Mean-Field interaction in the drifts.

Our first contribution to the above framework is the improvement of the $L^1(\Omega)$-estimate \eqref{expestimate} where the $L^1(\Omega)$ norm is replaced by the $L^{\infty}(\Omega)$ norm. A potential application of this result could be the numerical simulation of large Stochastic Differential Games under Nash equilibrium, since it tells us that if we simulate the classical Mean-Field System \eqref{approxsys1} instead, the additional error will certainly not exceed a deterministic threshold of order $\mathcal{O}(N^{-1})$. It should be mentioned that it could be possible to make the total error negligible in that case, as there is a growing literature on the development of new methods for the numerical simulation of Mean-Field Systems like \eqref{approxsys1} (see e.g \cite{RBM0,RBM1,RBM2,RBM3} for the Random Batch Method and \cite{RBM4,RBM5} for two applications). To improve \eqref{expestimate}, we obtain BSDE estimates similar to those in \cite[Proof of Theorem 4.2]{Delarue2018FromTM}, using, however, the derivatives of the solutions to the PDEs \eqref{sysPDE} and \eqref{master} instead of the solutions themselves. The required assumptions include boundedness of the derivatives of the functions $U$ and $U^{i,N}$ that are hard to verify, but we provide an example which shows that boundedness of the derivatives of the terminal condition $g(x, m)$ is sufficient in some simple settings.

Our second result in this work is to extend the list of asymptotic results that have already been established for the Nash states $X^{i,N,v^{N,*}}$ as $N \to \infty$, which contains a Central Limit Theorem and a Large Deviations Principle, by also obtaining a fundamental result of Extreme Value Theory. The probabilistic results of classical Extreme Value Theory (EVT) concern independent random observations $Z^1, Z^2, \ldots, Z^N$ with some common cumulative distribution function $F: \mathbb{R} \mapsto [0, 1]$, and the most fundamental question is whether there exist deterministic sequences $(a^N)_{N \in \mathbb{N}}$ and $(b^N)_{N \in \mathbb{N}}$ such that the normalized maximum
\begin{equation*}
\max_{i \leq N}\frac{Z^i - b^N}{a^N} = \frac{Z^{i_1^N} - b^N}{a^N}
\end{equation*}
converges weakly as $N \to \infty$, with the limiting distribution being non-degenerate. When this is the case, the limiting cumulative distribution function admits the form $G_{\gamma}(ax + b)$ for $a, b, \gamma \in \mathbb{R}$, where we define:
\begin{equation*}
G_{\gamma}(x) := e^{-(1 + \gamma x)^{-\frac{1}{\gamma}}} \quad \text{for} \quad 1 + \gamma x > 0.
\end{equation*}
It is then said that the distribution of the variables $Z^i$ belongs to the domain of attraction of the extreme value distribution $G_{\gamma}(x)$, with the parameter $\gamma$ called "the extreme value index", and we typically have the joint convergence of multiple normalized upper order statistics. The latter means that if we sort our random variables as $Z^{i_1^N} \geq Z^{i_2^N} \geq \ldots \geq Z^{i_N^N}$, for any fixed $k \in \mathbb{N}$ we have
\begin{align}\label{evtresult0}
&\left(\frac{Z^{i_1^N} - b^N}{a^N}, \frac{Z^{i_2^N} - b^N}{a_t^N}, \ldots, \frac{Z^{i_k^N} - b^N}{a^N}\right) \nonumber \\
&\qquad\qquad\qquad\qquad \to \left(\frac{(E_1)^{-\gamma} - 1}{\gamma}, \frac{(E_1 + E_2)^{-\gamma} - 1}{\gamma}, \cdots, \frac{(E_1 + E_2 + \cdots + E_k)^{-\gamma} - 1}{\gamma}\right)
\end{align}
weakly as $N \to \infty$, where $E_1, E_2, \ldots, E_k$ are i.i.d. standard exponential random variables. Moreover, when the distribution of the variables $Z^i$ belongs to the domain of attraction of the extreme value distribution, the intermediate order statistics $Z^{i_{k_N}^N}$ with $k_N \to \infty$, $\frac{k_N}{N} \to 0$ as $N \to \infty$ are typically asymptotically normal, meaning that:
\begin{equation}\label{intermconv}
\frac{Z^{(k_N)} - \hat{b}^N}{\tilde{a}^N} \sim \mathcal{N}(0, 1) \quad \text{as} \quad N \to \infty,
\end{equation}
for some deterministic normalizing sequences $(\tilde{a}^N)_{N \in \mathbb{N}}$ and $(\hat{b}^N)_{N \in \mathbb{N}}$ which are different from $(a^N)_{N \in \mathbb{N}}$ and $(b^N)_{N \in \mathbb{N}}$. We refer to \cite[Chapters 1 and 2]{HF10} for a deeper study of probabilistic EVT. The main use of the above asymptotic results is: (a) the construction of consistent statistical estimators for $\gamma$, $a^N$, $\tilde{a}^N$, $b^N$ and $\hat{b}^N$ which are functions of multiple intermediate order statistics $Z^{i_{k_N}^N}$; (b) the estimation of tail probabilities $1 - F(z)$ for large $z$ after $\gamma$, $a^N$, $\tilde{a}^N$, $b^N$ and $\hat{b}^N$ have been estimated. Those techniques are particularly useful when $z > \max_{i \leq N}Z^i$, in which case the simple estimation $$1 - F(z) \approx \frac{1}{N}\sum_{i=1}^N\mathbf{1}_{\{Z^i > z\}}$$ is not effective due to the right-hand side being equal to $0$, which can lead to the incorrect conclusion that a rare but highly impactful event can never occur. We refer to \cite[Chapters 3 and 4]{HF10} for an introduction to statistical EVT, see also \cite{PI75, HI75, WE78} for the earliest works in this area, \cite{DED89, DR298, CP01, GDP02, GM02, GCF04, CG06, GDR08, CH09, DGR12, CGBD16, PVV17} for relevant developments over the last 40 years, and \cite{DF19, BBD19, AR20, HPN22, AEG23, GM23} for some more recent works with more contemporary methods. The weak convergence \eqref{evtresult0} has already been extended to a setup of $N$ uncontrolled diffusions $\{X^{1,N}, X^{2,N}, \ldots, X^{1,N}\}$ with a standard Mean-Field interaction in the drifts - i.e. interaction through the direct dependence of the drifts on the systemic empirical measure - where for some fixed $t \geq 0$ we have $Z^i = X_t^{i, N}$ for all $i \in \{1, 2, \ldots, N\}$, see \cite{KLZ22} for the $k=1$ case and \cite{KLZ23} for the extension to arbitrary fixed $k \in \mathbb{N}$. Here we further extend \eqref{evtresult0} to our diffusive game setup under Nash equilibrium, where for some fixed $t \geq 0$ we have $Z^i = X_t^{i, N, v^{N,*}}$ for $i \in \{1, 2, \ldots, N\}$. While this is expected to open the way for the prediction of extreme values in continuously evolving diffusive populations with a game structure, the extension of \eqref{intermconv} which concerns intermediate order statistics to our setup and the development of statistical methods are left for subsequent works.

To extend \eqref{evtresult0} to our diffusive game setup, we sort the Nash states at a time $t \in [0, T]$ as $$X_t^{i_1^N(t),N,v^{N,*}} \geq X_t^{i_2^N(t),N,v^{N,*}} \geq \ldots \geq X_t^{i_N^N(t),N,v^{N,*}},$$ with the ranks $i_j^N(t)$ depending on $t \geq 0$ since the Brownian motions $W^i$ are independent, and we show that for fixed $k \in \mathbb{N}$, the top $k$ order statistics $\{X_t^{i_j^N(t),N,v^{N,*}}: j \in \{1,2,\ldots,k\}\}$ satisfy:
    \begin{align}\label{jjj}
&\left(\frac{X_t^{i_1^N(t),N,v^{N,*}} - b_t^N}{a_t^N}, \frac{X_t^{i_2^N(t),N,v^{N,*}} - b_t^N}{a_t^N}, \ldots, \frac{X_t^{i_k^N(t),N,v^{N,*}} - b_t^N}{a_t^N}\right) \nonumber \\
&\qquad\qquad\qquad\qquad \to \left(\frac{(E_1)^{-\gamma_t} - 1}{\gamma_t}, \frac{(E_1 + E_2)^{-\gamma_t} - 1}{\gamma_t}, \cdots, \frac{(E_1 + E_2 + \cdots + E_k)^{-\gamma_t} - 1}{\gamma_t}\right)
\end{align}
weakly as $N \to \infty$ for an appropriate choice of the Extreme Value Index $\gamma_t \leq 0$ and the deterministic normalizing sequences $\{a_t^N\}_{N=1}^{\infty}$ and $\{b_t^N\}_{N=1}^{\infty}$, where $E_1, E_2, \ldots, E_k$ are i.i.d. standard exponential random variables. In view of classical Extreme Value Theory which concerns i.i.d. random variables, we pick $\{a_t^N\}_{N=1}^{\infty}$, $\{b_t^N\}_{N=1}^{\infty}$ and $\gamma_t$ such that  \eqref{jjj} holds when the Nash states $X_t^{i,N,v^{N,*}}$ are replaced by the i.i.d. random variables $X_t^{i,v^{*}}$, and then it suffices to derive \eqref{HpropChaos} when $H^N$ is an appropriate bounded function that captures the $N \to \infty$ behavior of the normalized upper order statistics of its arguments. In that case, we reduce \eqref{HpropChaos} to \eqref{HpropChaos2} by using \eqref{expestimate} as described earlier, and the latter is a propagation of chaos property for the upper order statistics of the processes $\bar{X}^{i,N}$ whose establishment is now sufficient for our result to hold. However, while propagation of chaos for the upper order statistics of diffusions with a certain type of Mean-Field interaction in their drifts has been obtained in the second author's previous work \cite{KLZ22, KLZ23}, the drift terms of the diffusive processes $\bar{X}^{i,N}$ in \eqref{approxsys1} depend on the empirical measure $\bar{\mu}_t^{N}$ in a more general way that is not covered by those papers. For this reason, we use a Taylor expansion in the space $\mathcal{P}(\mathbb{R})$ to linearize those drift functions with respect to their measure argument, which leads to the following approximation of the system \eqref{approxsys1} that is covered by the existing propagation of chaos results for upper order statistics:
\begin{equation}\label{approxsys2}
\begin{aligned}
&\tilde{X}_t^{i,N} = X_0^i + \int_0^t\hat{b}\left(\tilde{X}_s^{i,N}, \mathcal{L}(X_s^{v^{*}}), U_{x}\left(s, \tilde{X}_s^{i,N}, \mathcal{L}(X_s^{v^{*}})\right)\right)ds \\
&\qquad \qquad \quad + \int_0^t\int_{\mathbb{R}}\frac{\delta}{\delta m}\left\{\hat{b}\left(\tilde{X}_s^{i,N}, m, U_{x}\left(s, \tilde{X}_s^{i,N}, m\right)\right)\right\}(z_1) \Big|_{m = \mathcal{L}(X_s^{v^{*}})} \\
&\qquad \qquad \qquad \qquad \qquad \qquad \qquad \qquad \qquad \qquad \qquad \times (\tilde{\mu}_s^N - \mathcal{L}(X_s^{v^{*}}))(dz_1)ds + \sigma W_t^i, \\
&\,\,\,\,\, \tilde{\mu}_t^{N} = \frac{1}{N}\sum_{\ell = 1}^N\delta_{\tilde{X}_t^{\ell,N}},
\end{aligned}
\end{equation}
with $\frac{\delta }{\delta m}$ being our notation for a Frechet derivative with respect to $m \in \mathcal{P}(\mathbb{R})$. After that, we will see that the McKean-Vlasov SDE that governs the $N \to \infty$ behavior of the Mean-Field System \eqref{approxsys2} is again \eqref{repagent}, i.e the same as for the systems \eqref{approxsys1} and \eqref{systemNash} that \eqref{approxsys2} approximates. Therefore, controlling the relative entropy that arises by applying Girsanov's theorem to transform the law of $(\bar{X}_t^{1,N}, \bar{X}_t^{2,N}, \ldots, \bar{X}_t^{N,N})$ into that of $(\tilde{X}_t^{1,N}, \tilde{X}_t^{2,N}, \ldots, \tilde{X}_t^{N,N})$, we further reduce the establishment of our result to having a propagation of chaos property for upper order statistics that is provided by the existing literature. Finally, it should be noted that our drift linearization technique was also used in \cite[Step 3 of Section 6]{KLZ22} and \cite[Section 4]{KLZ23}, but it was performed in $\mathbb{R}$ instead of $\mathcal{P}(\mathbb{R})$ to eliminate a nonlinear dependence on a linear functional of the empirical measure.

\section{Assumptions and main results}
We begin with the most fundamental assumption of this paper:
\begin{assumption}\label{fundass}
For any $(x, y, m) \in \mathbb{R}^2 \times \mathcal{P}(\mathbb{R})$, the function:
\begin{equation*}
V \ni v \mapsto b(x, m, v)y + f(x, m, v)
\end{equation*}
admits a minimizer $v =\hat{v}\left(x, m, y\right)$.
\end{assumption}
\noindent We can now introduce the notations that will be used in everything that follows:
\begin{notation}
The following notations will be used throughout this paper:
\begin{enumerate}
\item We write $C$ for the arbitrary positive constant, which is deterministic, depends only on the parameters of the system \eqref{system} - \eqref{objectivequantity} (i.e the functions $b,f,g$, the volatility $\sigma$ and the distribution of the initial values $X_0^i$), and will generally change from line to line. 

\item Having assummed the existence of the minimizer $\hat{v}\left(x, m, y\right)$ in Assumption~\ref{fundass}, we denote:
\begin{align}\label{minimized2}
\hat{b}\left(x, m, y\right) =  b\left(x, m, \hat{v}\left(x, m, y\right)\right) \quad \text{and} \quad \hat{f}\left(x, m, y\right) =  f\left(x, m, \hat{v}\left(x, m, y\right)\right)
\end{align}
\item The differentiation of any function $q$ with respect to an $\mathbb{R}$-valued argument $w$ is denoted by $q_w$, where:
\begin{itemize}
    \item For $q$ being $\hat{b}$, $\hat{f}$, $\hat{v}$ or any of their derivatives we may have $w = x, y$ or $z_i$, which denote, respectively, the first, third and $(3+i)$-th argument (for $i \geq 1$ and only for derivatives of $q$ that are of order $\geq i$ with respect to the measure argument $m$).

    \item For $q$ being any derivative of $U$ we may have $w = t, x$ or $z_i$, which denote, respectively, the first, second and $(3+i)$-th argument (for $i \geq 1$ and only for derivatives of $q$ that are of order $\geq i$ with respect to the measure argument $m$).
\end{itemize}
The Frechet derivative of any function $q$ with respect to the measure argument $m$ is denoted by $\frac{\delta q}{\delta m}$, and we write $q_m$ for the corresponding intrinsic derivative. Hence, we will write for example $\frac{\delta q}{\delta m}(x, m, U_x(t, x, m), z_1)$ for the function that occurs when we plug $y = U_x(t, x, m)$ in $\frac{\delta q}{\delta m}(x, m, y, z_1)$, and $\frac{\delta}{\delta m}\{q(x, m, U_x(t, x, m))\}(z_1)$ for the total derivative of $q(x, m, U_x(t, x, m))$ with respect to $m$: $$\frac{\delta}{\delta m}\{q(x, m, U_x(t, x, m))\}(z_1) = \frac{\delta q}{\delta m}(x, m, U_x(t, x, m), z_1) + q_y(x, m, U_x(t, x, m))\frac{\delta U_x}{\delta m}(t, x, m, z_1),$$ which we can differentiate with respect to $z_1$ to obtain $$\{q(x, m, U_x(t, x, m))\}_m(z_1) = q_m(x, m, U_x(t, x, m), z_1) + q_y(x, m, U_x(t, x, m))U_{xm}(t, x, m, z_1).$$
\end{enumerate}
\end{notation}
We introduce now the conditions necessary for our results to hold, which highly overlap with those required for the establishment of a Central Limit Theorem and a Large Deviations Principle in \cite{Delarue2018FromTM, DLR20}. The first set of conditions is needed for both results we establish, and it is given below:
\begin{assumption}\label{ass0}
The following conditions are in force:
\begin{enumerate}
    \item The initial values $X_0^i$ are i.i.d. random variables, and their common law $\mu_0$ satisfies
    \begin{equation*}
    \int_{\mathbb{R}}x^{p'}\mu_0(dx) < \infty
    \end{equation*}
    for some $p' > 4$.

    \item The volatility $\sigma$ is assumed to be a positive constant.
    \item The classical partial derivatives $\hat{b}_x, \hat{b}_m, \hat{b}_y, \hat{f}_y$ and the second order classical partial derivative $\hat{b}_{yy}$ of the functions $\hat{b}\left(x, m, y\right)$ and $\hat{f}\left(x, m, y\right)$ exist and are bounded.

\item For each $N \in \mathbb{N}$, the system of PDEs \eqref{sysPDE} with the terminal conditions \eqref{terminalconditions1} admits classical solutions $U^{i,N}$, in the sense that each $U^{i,N}(t, \mathbf{x})$ is continuously differentiable in $t > 0$ and twice continuously differentiable in $\mathbf{x} = (x_1, x_2, \ldots, x_N) \in \mathbb{R}^N$. Finally, for each $N \in \mathbb{N}$, there exists a constant $K_N > 0$ such that
\begin{align*}
\left|U^{i,N}_{x_j}\left(t, \mathbf{x}\right)\right| &\leq K_N\left(1 + \sqrt{x_1^2+x_2^2+\ldots+x_N^2}\right) \quad \text{and} \nonumber \\
\left|U^{i,N}\left(t, \mathbf{x}\right)\right| &\leq K_N\left(1 + x_1^2+x_2^2+\ldots+x_N^2\right)
\end{align*}
for all $i,j \in \{1,2,\ldots,N\}$, all $t \in [0, T]$ and all $\mathbf{x} = (x_1, x_2, \ldots, x_N) \in \mathbb{R}^N$.

\item The Master equation \eqref{master} with the terminal condition $U(T,x,m) = g(x, m)$ admits a solution $U = U(t,x,m)$, whose partial derivatives $U_t$, $U_x$, $U_m$, $U_{xx}$, $U_{xm}$, $U_{mx}$, $U_{mm}$, $U_{mz_1}$ exist and are continuous under the product topologies on the spaces they are defined, e.g $[0,T] \times \mathbb{R} \times \mathcal{P}(\mathbb{R})$ for $U_{xx}$ and $[0,T] \times \mathbb{R} \times \mathcal{P}(\mathbb{R}) \times \mathbb{R}^2$ for $U_{mm}$. Finally, the derivatives $U_x$, $U_m$, $U_{xx}$, $U_{xm} = U_{mx}$ and $U_{mm}$ are bounded, and the derivative $U_{xmm}$ exists and is also bounded.
\end{enumerate}
\end{assumption}
\begin{remark}\label{werecallthis}
We make the following important notes:
\begin{enumerate}
    \item Assumption~\ref{ass0} is essentially \cite[Assumptions A and B]{Delarue2018FromTM} for $p^*=1$, which are required for recalling results and computations from that paper, accompanied by the existence and boundedness of $\hat{b}_{yy}$ and $U_{xmm}$. There is also a simplification in the sense that we ask for $\hat{b}$ and $\hat{f}$ to have bounded classical derivatives instead of simply being Lipschitz; for one of our results (Theorem~\ref{mainresult} below), our proof seems to work when $\hat{b}$ is Lipschitz in $x$ and $w$ and when $\hat{f}$ is Lipschitz in $y$, but e.g $\hat{b}_y$ and $\hat{b}_{yy}$ have to be classical derivatives that exist everywhere. 

    \item The conditions in Assumption~\ref{ass0} that are hard to verify are the existence of sufficiently regular solutions $U^{i,N}$ to \eqref{sysPDE} and a sufficiently regular solution $U$ to the Master equation \eqref{master}. Assumptions A and B in \cite{Delarue2018FromTM} are accompanied by references that verify all these conditions for a wide range of setups, except from the existence and boundedness of $U_{xmm}$: for \eqref{sysPDE}, the results of \cite{bookqlpde} are applicable to Stochastic Differential Games where the Hamiltonian $H(x,m,y) := \hat{b}(x,m,y)y + \hat{f}(x,m,y)$ is globally Lipschitz in $y$, while \cite{book2syspde} and \cite[Section 6.3.1]{CD2} contain results for setups where $H(x,m,y)$ is quadratic in $y$; for \eqref{master}, results for the existence of a sufficently regular solution can also be found in \cite{CD2}. For the existence and boundedness of $U_{xmm}$ and of other derivatives of $U$ in Assumptions \ref{ass0-} and \ref{ass0'} below that are not covered by the above references, we highly expect that having a regular enough terminal cost function $g(x, m)$ is sufficient. Possible ways for showing that could include the adaptation to higher order derivatives of the computations performed in \cite{CD2} for obtaining bounded second order derivatives (which we expect to be long and tedious but not requiring any novel ideas), and the extension of the following classical bootstraping method to PDEs on $[0, T] \times \mathbb{R} \times \mathcal{P}(\mathbb{R})$ and high-dimensional quasilinear parabolic systems: if $U$ is a classical solution to $$U_t + U_{xx} + q(U_x)U_x = 0$$
    with $U(T, \cdot)$ and $q(\cdot)$ being sufficiently regular, $U$ solves also the linear parabolic PDE $$U_t + U_{xx} + qU_x = 0$$ with $q = q(U_x)$ seen as a known $C^{1,1}$ function. From there, standard regularity theory of linear parabolic PDEs gives $U \in C^{1,3}$ when $U(T,\cdot) \in C^3$. 

    \item For our setup, the Central Limit Theorem in \cite{Delarue2018FromTM} and the Large Deviations Principe in \cite{DLR20} are also obtained when \cite[Assumption B]{Delarue2018FromTM} is replaced by \cite[Assumption B']{Delarue2018FromTM}, and a class of games for which this assumption is in force is also provided. In our paper, this amendment translates to imposing boundedness on $U$ and uniform boundedness (in $i$ and $N$) on $U^{i,N}$, but allowing for $\hat{f}_y$ to grow linearly in $y$. As \cite[Assumption B]{Delarue2018FromTM} is only used in the proof of Theorem~\ref{mainresult} below when \eqref{expestimate} is recalled, and since the latter also holds under \cite[Assumption B']{Delarue2018FromTM} for large $N$, we deduce that Theorem~\ref{mainresult} below (which gives the $N \to \infty$ behavior of the upper order statistics of the Nash states) holds under this modification as well.

    \item For a popular class of linear-quadratic games, $g(x,m)$ is a quadratic function and thus the first order derivatives $U_x$ and $U_m$ are not bounded. However, the authors in \cite{Delarue2018FromTM, DLR20} show that the Central Limit Theorem and the Large Deviations Principle they obtain can still hold in these setups, by studying the systemic risk model from \cite{CFS15} (see e.g \cite[Section 6]{DLR20}). This is also the case for our result on the $N \to \infty$ behavior of the upper order statistics of the Nash states which is given in Theorem~\ref{mainresult} below. The reason is that the proof uses the boundedness of $U_x$ and $U_m$ only for recalling \eqref{expestimate} from \cite{Delarue2018FromTM, DLR20}, which means that this boundedness of $U_x$ and $U_m$ can be ignored if \eqref{expestimate} can be verified directly. This is exactly what we do in Example~\ref{finexample} below, where we also consider the systemic risk model from \cite{CFS15} 
\end{enumerate}
\end{remark}

\noindent The next set of conditions is only required for our result on the weak convergence of the normalized upper order statistics of the Nash states; it contains the conditions required in addition to Assumption~\ref{ass0} for the proof we will give for that result.

\begin{assumption}\label{ass0-}
The following conditions are in force:
\begin{enumerate}
    \item For some $\kappa_0 > 0$, the law $\mu_0$ of the independent initial values $X_0^i$ satisfies
    \begin{equation*}
    \int_{\mathbb{R}}e^{\kappa_0 x^2}\mu_0(dx) < \infty,
    \end{equation*}
    which implies that it also has finite moments (satisfying, in particular, (i) of Assumption~\ref{ass0}).
    \item The derivatives $\hat{b}_{yx}$, $\hat{b}_{ym}$, $\hat{b}_{mm}$, $\frac{\delta U_x}{\delta m}$, $(\frac{\delta U_{x}}{\delta m})_x$, $(\frac{\delta \hat{b}}{\delta m})_x$, $(\frac{\delta \hat{b}}{\delta m})_y$ and $(\frac{\delta \hat{b}}{\delta m})_{yz_1}$ exist and are bounded. 
\end{enumerate}
\end{assumption}

\noindent Finally, we have the following conditions that are required in addition to Assumption~\ref{ass0} for the improvement of \eqref{expestimate} to an $L^{\infty}(\Omega)$ estimate:

\begin{assumption}\label{ass0'}
The following conditions are in force:
\begin{enumerate}
\item For the functions $\hat{b}\left(x, m, y\right)$ and $\hat{f}\left(x, m, y\right)$, the classical first order derivatives $f_x, f_m$ exist, and the classical second order derivatives 
$\hat{b}_{xy}$, $\hat{b}_{my}$, $\hat{f}_{xy}$, $\hat{f}_{my}$, $\hat{f}_{yy}$ exist and are bounded.

\item The third order partial derivatives of each $U^{i,N}$ that do not involve $t$, i.e $U_{x_{j_1} x_{j_2} x_{j_3}}^{i,N}$ for $j_1, j_2, j_3 \in \{1, 2, \ldots, N\}$, exist and are continuous in $(t, \mathbf{x}) \in \mathbb{R}^{N+1}$. Moreover, the second order partial derivatives of each $U^{i,N}$ that involve $t$, i.e $U_{t x_j}^{i,N}$ and $U_{x_j t}^{i,N}$ for $j \in \{1, 2, \ldots, N\}$, exist and are continuous in $(t, \mathbf{x}) \in \mathbb{R}^{N+1}$. Finally, we assume that the first order partial derivatives $U_{x_i}^{i,N}$ are bounded uniformly in $N$ and $i \in \{1, 2, \ldots, N\}$, and the second order partial derivatives $U_{x_i x_j}^{i,N}$ are bounded (not necessarily uniformly in $N$ and $i \in \{1, 2, \ldots, N\}$).

\item The third order partial derivatives of $U$ that do not involve differentiation in $t$, i.e $U_{xxx}$, $U_{xxm}$, $U_{xmx}$, $U_{mxx}$, $U_{mmx}$, $U_{mxm}$, $U_{xmm}$, $U_{mmm}$, $U_{xmz_1}$, $U_{mz_1x}$, $U_{mxz_1}$, $U_{mz_1z_1}$, $U_{mmz_1}$, $U_{mz_1m}$ and $U_{mmz_2}$, exist and are continuous under the product topologies on the spaces they are defined, e.g $[0,T] \times \mathbb{R} \times \mathcal{P}(\mathbb{R})$ for $U_{xxx}$ and $[0,T] \times \mathbb{R} \times \mathcal{P}(\mathbb{R}) \times \mathbb{R}^2$ for $U_{xmm}$. Moreover, the second order partial derivatives that involve a single differentiation in $t$, i.e $U_{tx}, U_{xt}, U_{tm}$ and $U_{mt}$, exist and are continuous under the product topologies on the spaces they are defined. Finally, the derivatives $U_{mz_1}, U_{xxm}, U_{xmz_1}$, $U_{mmm}, U_{mmz_1}$ and $U_{mmz_2}$ are bounded.
\end{enumerate}
\end{assumption}

\begin{remark}
We have the following important observations: 
\begin{enumerate}
    \item Under Assumption~\ref{ass0'}, since $U_x$ and $U_{x_i}^{i,N}$ are assumed to be bounded, the boundedness in $y$ of $\hat{b}_x, \hat{b}_m, \hat{b}_y, \hat{f}_y$ and $\hat{b}_{yy}$ in Assumption~\ref{ass0} can be reduced to local boundedness.
    \item Assumption~\ref{ass0'} imposes roughly that the functions $\hat{b}, \hat{f}, U$ and $U^{i,N}$ can be differentiated an additional time compared to the setup of \cite{Delarue2018FromTM, DLR20}. The reason is that the replacement of $L^{1}(\Omega)$ with $L^{\infty}(\Omega)$ in \eqref{expestimate} requires the reestablishment of an $L^{\infty}(\Omega)$ estimate for the differences $$U\left(t, X_t^{i,N,v^{N,*}}, \mu_t^{N, v^{N,*}}\right) - U^{i,N}\left(t, X_t^{1,N,v^{N,*}}, X_t^{2,N,v^{N,*}}, \ldots, X_t^{N,N,v^{N,*}} \right)$$
    which was obtained in those papers (\cite[Display (4.18)]{Delarue2018FromTM}), but this time with $U$ and $U^{i,N}$ replaced by $U_x$ and $U_{x_i}^{i,N}$ respectively. That $L^{\infty}(\Omega)$ estimate was obtained in \cite{Delarue2018FromTM, DLR20} via the manipulation of certain equations that involve $\hat{b}, \hat{f}, U, U^{i,N}$ and their derivatives, meaning that the above reestablishment requires computations of a similar nature after differentiating those equations, and the later increases by $1$ the orders of the highest order derivatives that appear in the computations which are the least sufficient highest orders of differentiability.
\end{enumerate}
\end{remark}
We can now present the two results we obtain in this paper. Our first result is the improvement of \eqref{expestimate} to an $L^{\infty}(\Omega)$ estimate:

\begin{theorem}\label{lem1}
Consider the Nash states $\{X^{1,N, v^{N, *}}, \, X^{2,N, v^{N, *}}, \, \ldots, \, X^{N,N, v^{N, *}}\}$ that satisfy the system \eqref{approxsys1}. If Assumptions~\ref{fundass}, \ref{ass0} and~\ref{ass0'} hold, we can find a $C > 0$ such that $\mathbb{P}$-almost surely, for all $t \in [0, T]$ and $N \in \mathbb{N}$ we have:
\begin{align}
&\sum_{i=1}^N\Bigg\{\hat{b}\left(X_t^{i, N, v^{N,*}}, \mu_t^{N, v^{N,*}}, U_{x_i}^{i, N}\left(t, X_t^{1, N, v^{N, *}}, X_t^{2, N, v^{N, *}}, \ldots, X_t^{N, N, v^{N, *}}\right)\right) \nonumber \\
& \qquad\qquad\qquad\qquad\qquad - \hat{b}\left(X_t^{i, N, v^{N,*}}, \mu_t^{N, v^{N,*}}, U_{x}\left(t, X_t^{i, N, v^{N,*}}, \mu_t^{N, v^{N,*}}\right) \right)\Bigg\}^2 \leq \frac{C}{N}.
\end{align}
\end{theorem}
\begin{remark}
It can be seen from the proof of the above estimate that we actually have an $L^{\infty}(\mathbb{R}^{N+1})$ bound of order $\mathcal{O}(N^{-1})$ for both
$$U_{x_i}^{i,N}(t, x_1, x_2, \ldots, x_N) -  U_{x}\left(t, x_i, N^{-1}\sum_{j=1}^N\delta_{x_j}\right)$$
and $U_{x_j}^{i,N}(t, x_1, x_2, \ldots, x_N)$ with $i \neq j$, which is uniform in $i$ and $N$.
\end{remark}
\noindent Then, we have our second result, which establishes the asymptotic behavior of the upper order statistics of the Nash states:
\begin{theorem}\label{mainresult}
Suppose that Assumptions~\ref{fundass}, \ref{ass0} and \ref{ass0-} hold. For each $i \in \mathbb{N}$, let $(X_t^{i, v^{*}})_{t \in [0, T]}$ be the unique solution $(X_t^{v^{*}})_{t \in [0, T]}$ to the McKean-Vlasov SDE \eqref{repagent} when $(W_t)_{t \in [0, T]}$ is replaced by $(W_t^i)_{t \in [0, T]}$. Suppose also that for a fixed $t \geq 0$, $\mathcal{L}(X_t^{v^{*}})$ belongs to the domain of attraction of an extreme value distribution with cumulative distribution function
\begin{equation*}
G_{\gamma_t}(x) = e^{-(1 + \gamma_t x)^{-\frac{1}{\gamma_t}}}, \quad 1 + \gamma_t x \geq 0,
\end{equation*}
for some value $\gamma_t \in \mathbb{R}$, in which case we can find deterministic sequences $(a_t^N)_{N \in \mathbb{N}}$ and $(b_t^N)_{N \in \mathbb{N}}$ such that
\begin{equation}\label{evtcond}
\mathbb{P}\left(\max_{i \leq N}\frac{X_t^{i, v^{*}} - b_t^N}{a_t^N} \leq x\right) \to G_{\gamma_t}(x)
\end{equation}
for any $x \in \mathbb{R}$ as $N \to \infty$. Then, sorting the Nash states $X_t^{i, N, v^{N, *}}$ as $$X_t^{i_1^N(t), N, v^{N, *}} \geq X_t^{i_2^N(t), N, v^{N, *}} \geq \ldots \geq X_t^{i_N^N(t), N, v^{N, *}},$$ for any $k \in \mathbb{N}$ we have the following convergence for the top $k$ order statistics of the Nash system:
\begin{align}\label{evtresult}
&\left(\frac{X_t^{i_1^N(t), N, v^{N, *}} - b_t^N}{a_t^N}, \frac{X_t^{i_2^N(t), N, v^{N, *}} - b_t^N}{a_t^N}, \ldots, \frac{X_t^{i_k^N(t), N, v^{N, *}} - b_t^N}{a_t^N}\right) \nonumber \\
&\qquad\qquad\qquad\qquad \to \left(\frac{(E_1)^{-\gamma_t} - 1}{\gamma_t}, \frac{(E_1 + E_2)^{-\gamma_t} - 1}{\gamma_t}, \cdots, \frac{(E_1 + E_2 + \cdots + E_k)^{-\gamma_t} - 1}{\gamma_t}\right)
\end{align}
weakly as $N \to \infty$, where $E_1, E_2, \ldots, E_k$ are i.i.d. standard exponential random variables.
\end{theorem}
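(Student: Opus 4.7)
The plan is to reduce the claim to the analogous EVT result for a system of uncontrolled diffusions with \emph{linear} mean-field interaction in the drift, which is already established in \cite{KLZ22, KLZ23}. The reduction is carried out in two successive stages, each powered by a Girsanov transformation whose Radon--Nikodym derivative tends to $1$ in $L^1(\Omega)$; the first stage moves from the Nash system \eqref{systemNash} to the auxiliary system \eqref{approxsys1}, and the second moves from \eqref{approxsys1} to the linearized system \eqref{approxsys2}.

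First I would sharpen \eqref{expestimate} by establishing the same estimate with the $L^1(\Omega)$ norm replaced by the $L^\infty(\Omega)$ norm. Following the BSDE strategy of \cite[Proof of Theorem 4.2]{Delarue2018FromTM}, but applied to the differences $U^{i,N}_{x_i}-U_x$ and one further derivative rather than to $U^{i,N}-U$ itself, one writes BSDEs obtained by differentiating \eqref{sysPDE} and \eqref{master} and closes them using the boundedness of the gradients and Hessians of $\tilde b$, $\tilde f$ and of the derivatives of $U$ up to order three, as supplied by Assumption \ref{ass0}. Since these BSDEs have bounded drivers and terminal data of order $N^{-1}$, the comparison can be closed pointwise in $\omega$. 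With this $L^\infty$ bound, the Doléans-Dade exponential that exchanges \eqref{systemNash} and \eqref{approxsys1} satisfies Novikov's criterion with a deterministic constant and its logarithm tends to $0$ in probability, so the Radon--Nikodym derivative converges to $1$ in $L^1(\Omega)$; consequently, for every uniformly bounded measurable functional $H^N$ of $N$ variables, one obtains \eqref{HpropChaos2} with the Nash states on the left replaced by $\bar X_t^{i,N}$.

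For the second stage, I would perform a first-order Taylor expansion of $\tilde b(\cdot,m,U_x(\cdot,\cdot,m))$ in the measure argument around $m=\text{Law}(X_s^{v^*})$. The linearized system is exactly \eqref{approxsys2}, and the quadratic remainder can be controlled by $C\,\mathcal{W}_1(\bar\mu_s^N,\text{Law}(X_s^{v^*}))^2$ uniformly in $\omega$ thanks to the bounds on $\tfrac{\delta U_x}{\delta m}$ and $\tfrac{\delta \tilde b}{\delta m}$ in Assumption~\ref{ass0}; combined with the standard $\mathcal{O}(N^{-1/2})$ Wasserstein rate of convergence for the empirical measure of an exchangeable system with mean-field drift, this remainder is absorbed by a second Girsanov transformation whose exponent once more tends to $0$ in probability with bounded Novikov quantity. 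The drift in \eqref{approxsys2} depends on the empirical measure only through a linear functional, so the resulting system is of the form treated in \cite{KLZ22, KLZ23}, and both \eqref{approxsys1} and \eqref{approxsys2} share \eqref{repagent} as their mean-field limit.

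Finally, I would take $H^N$ to be a bounded continuous truncation of the map returning the normalized top $k$ order statistics $\bigl((y_{i_1^N}-b_t^N)/a_t^N,\dots,(y_{i_k^N}-b_t^N)/a_t^N\bigr)$; the tightness supplied by \eqref{evtcond} makes the truncation harmless for the limit. Applying the EVT theorem of \cite{KLZ22, KLZ23} to \eqref{approxsys2} produces the desired right-hand side of \eqref{evtresult}, and the two Girsanov reductions transport the weak limit back to the Nash states. The main obstacle is the first step: propagating $L^\infty(\Omega)$ control through the BSDEs for $U^{i,N}_{x_i}-U_x$ requires handling the extra cross terms that appear when one differentiates the embedded minimization in \eqref{sysPDE}, and is where the full strength of the smoothness hypotheses in Assumption~\ref{ass0}(ii)--(v) is used.
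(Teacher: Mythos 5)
Your outline follows the paper's two-stage reduction (Nash system $\to$ \eqref{approxsys1} $\to$ \eqref{approxsys2}), the same $L^\infty$ BSDE estimate from differentiated PDEs (Proposition~\ref{lem1}), the same Taylor linearization in the measure argument, and the same use of sub-Gaussian Wasserstein moment bounds (Lemma~\ref{lem2}). The one place where you diverge is the argument that the two Girsanov densities tend to $1$ in $L^1(\Omega)$: you invoke Novikov plus convergence in probability (with Scheff\'e implicit), whereas the paper's Lemma~\ref{lem3} is a quantitative estimate built from the Cairoli--Kronecker chaos expansion of the stochastic exponential, under two moment conditions \eqref{momentcond}--\eqref{momentcond2} of different orders, and with an explicit splitting of $[0,T]$ into pieces of length $\eta$ so the expansion converges geometrically uniformly in $N$.

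Your version of this step contains an inaccuracy that matters. For the first reduction the quadratic variation of the Girsanov martingale is indeed bounded by a \emph{deterministic} $CT/N$ thanks to the a.s.\ estimate in Proposition~\ref{lem1}, so Novikov is immediate. But for the second reduction the drift gap is only controlled by $C\,\mathcal{W}_1^2(\bar\mu_s^N,\mathrm{Law}(X_s^{v^*}))$ per particle, so the $\ell^2$-norm of the drift-difference vector is of size $\sqrt{N}\,\mathcal{W}_1^2$, which is random and unbounded; the ``Novikov quantity'' $\tfrac12\int_0^T\lVert\Delta\mathbf{B}\rVert_2^2\,ds$ is \emph{not} bounded a.s.\ and only has sub-exponential tails coming from the $\Gamma(p)/N^{p}$-type moment bounds of Lemma~\ref{lem2}. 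To make your approach rigorous you would still need to split $[0,T]$ into short subintervals to get Novikov's exponential-integrability (the constants blow up with $T$), which is essentially the same time-splitting the paper performs inside the chaos expansion; the paper's treatment also doubles as the proof that the exponential is a true martingale, which you would otherwise need to argue separately before invoking Scheff\'e. With those corrections your route is valid and slightly more elementary in the first stage, but it offers no real shortcut in the second. Your final step (bounded truncations of the order-statistic functionals vs.\ the paper's indicator functionals of point-process counts) is an equivalent packaging, since both are bounded measurable and both feed into the same Lemma~\ref{lem3}-type statement.
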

\begin{remark}\label{anotherremarkwerecall}
It should be noted that:
\begin{enumerate}
    \item When $\gamma_t = 0$, we define $G_0(x) := \lim_{\gamma \to 0^+}G_{\gamma}(x) = e^{-e^{-x}}$ (the standard Gumbel distribution), in which case the limit in \eqref{evtresult} is interpreted as
\begin{equation*}
\left(-\log\left(E_1\right), -\log\left(E_1 + E_2\right), \ldots, -\log\left(E_1 + E_2 + \ldots + E_k\right)\right).
\end{equation*}
\item When the initial law $\mu_0$ is Gaussian and $\hat{b}(x,m,U_x(s,x,m))$ is linear in $x$, one can solve \eqref{repagent} explicitly to find that $X^{v^*}$ is a Gaussian process, i.e $X_t^{v^*} \sim \mathcal{N}(m_t, \sigma_t^2)$ for all $t \geq 0$. In that case, by \cite[Example 1.1.7]{HF10} we have that for each $t \geq 0$, the law $\mathcal{L}(X_t^{v^*})$ belongs to the domain of attraction of the standard Gumbel distribution and we have \eqref{evtcond} with $\gamma_t = 0$ and 
\begin{equation}\label{bseq}
{b^N_t} = \sigma_t \sqrt{2\log(N) - \log(\log(N)) -\log(4\pi)} + m_t
\end{equation}
and finally
\begin{equation}\label{aseq}
{a^N_t} = \frac{\sigma_t}{\sqrt{2\log(N) - \log(\log(N)) -\log(4\pi)}}
\end{equation}
(a relevant setup that arises in systemic risk is analyzed in Example~\ref{finexample} below). A standard way for verifying \eqref{evtcond} is the derivation of the Von Mises condition (see \cite[Theorem 1.1.8]{HF10}), and some recent computations that use Malliavin Calculus have shown that this is possible with $\gamma_t = 0$ when $\hat{b}(x,m,U_x(s,x,m))$ admits a limit as $x \to +\infty$ and when the initial law $\mu_0$ is a dirac measure (so $X_0^i = x_0$ for all $i \in \{1,2,\ldots,N\}$, for some $x_0 \in \mathbb{R}$), but the normalizing sequences $(a_t^N)_{N \in \mathbb{N}}$ and $(b_t^N)_{N \in \mathbb{N}}$ have not been computed explicitly. The derivation of an as wide as possible class of McKean-Vlasov SDEs \eqref{repagent} for which \eqref{evtcond} holds will be a major research direction in the near future, as well as the explicit computation of the sequences $(a_t^N)_{N \in \mathbb{N}}$ and $(b_t^N)_{N \in \mathbb{N}}$ whenever it is possible; having \eqref{evtcond} with $\gamma_t = 0$ whenever Assumptions~\ref{ass0} and~\ref{ass0-} are satisfied seems like a quite possible scenario.
\item As \cite[Theorem 1.2.1]{HF10} suggests, \eqref{evtcond} could hold with $\gamma_t > 0$ only when $X_t^{v^*}$ has regularly varying tails, and with $\gamma_t < 0$ only when $X_t^{v^*}$ is upper bounded by a deterministic constant. While the first is incompatible with Assumptions~\ref{ass0}, and~\ref{ass0-} by \eqref{mckeanmomentbound} in Lemma~\ref{lem2} below, the second is feasible in the extended setup where $\sigma$ depends on $X^{v^*}$ (see also Remark~\ref{ncvol}). For example, this would be the case if $-X^{v^*}$ could resemble a CIR process, i.e $(Y_t)_{t \in [0,T]}$ with
\begin{equation*}
Y_t = y + \int_0^tk(\theta - Y_s)ds + \xi\int_0^t\sqrt{|Y_s|}dW_s,
\end{equation*}
for $t \geq 0$, since one can apply \cite[Theorem 1.2.6]{HF10} on the known distribution function of $Y_t$ (see \cite{cox1985theory}) to deduce that the law of $-Y_t$ is in the domain of attraction of $G_{\gamma_t}$ with $\gamma_t = -\frac{\xi^2}{2k\theta}$
\end{enumerate}
\end{remark}
\begin{remark}\label{ncvol}
Theorem~\ref{mainresult} can be extended to setups where the constant $\sigma$ is replaced by a state-dependent volatility $\sigma(X_t^{i,N,v^N})$. A technique that allows this extension is discussed in the final section of the paper, and our argument is solid for the case where $\sigma: \mathbb{R} \mapsto \mathbb{R}_+$ is bounded and has upper bounded derivatives up to order $3$.
\end{remark}
\begin{example}\label{finexample}
Consider the linear-quadratic game introduced in \cite{CFS15}, where:
\begin{align*}
b(x, m, v) &= \bar{b}(\bar{m} - x) + v, \\
f(x, m, v) &= \frac{1}{2}v^2 - qv(\bar{m} - x) + \frac{\epsilon}{2}(\bar{m} - x)^2, \\
g(x, m) &= \frac{\tilde{g}}{2}(\bar{m} - x)^2
\end{align*}
in which $\bar{m} = \int y \cdot m(dy)$ and $\bar{b}, q, \epsilon, \bar{g}$ are constants. In this setup, the state processes $X^{i,N,v^{N}}$ describe the log-monetary reserves of $N$ banks that borrow from each other at a fixed rate $\bar{b}$ that is common for any two banks, and at any time $t \in [0, T]$, the $i$-th bank with log-wealth $X^{i,N,v^{N}}$ can also borrow from a central bank at a rate $v_t^{i,N}$ it chooses (where $v_t^{i,N} < 0$ corresponds to lending). The expected cost $J^{i,N}$ that the $i$-th bank wants to minimize (defined in \ref{objectivequantity}) is the expected value of an accumulated (until a future time $T$) penalty for deviating from the average performance of all the banks, combined with an accumulated (also until time $T$) cost of borrowing from the central bank. The above setup is also the main example provided in \cite{Delarue2018FromTM} and \cite{DLR20}, which establish a Central Limit Theorem and a Large Deviations Principle for our setup, and the authors obtain
\begin{align*}
v^{i,N,*}&= \left(q + \phi^N(t)\left(1 - \frac{1}{N}\right)\right)\left(\frac{1}{N}\sum_{j=1}^NX_t^{j,N,v^{N,*}} - X_t^{i,N,v^{N,*}}\right)
\end{align*}
where $\phi^N$ is the solution to an ODE of Riccati type, and also:
\begin{align*}
U(t,x,m) = \frac{\phi^{\infty}(t)}{2}(x - m)^2
\end{align*}
for $\phi^{\infty}(t) = \displaystyle{\lim_{N \to \infty}}\phi^{N}(t)$. In that case, the system \eqref{systemNash} becomes
\begin{align*}
X_t^{i,N,v^{N,*}} = X_0^i + \int_0^t\left(\bar{b}+q+\phi^N(s)\left(1 - \frac{1}{N}\right)\right)\left(\frac{1}{N}\sum_{j=1}^NX_s^{j,N,v^{N,*}} - X_s^{i,N,v^{N,*}}\right)ds+\sigma W_t^i,
\end{align*}
while both systems \eqref{approxsys1} and \eqref{approxsys2} coincide with
\begin{align*}
\bar{X}_t^{i,N} = X_0^i + \int_0^t\left(\bar{b}+q+\phi^{\infty}(s)\right)\left(\frac{1}{N}\sum_{j=1}^N\bar{X}_s^{j,N} - \bar{X}_s^{i,N}\right)ds+\sigma W_t^i,
\end{align*}
with the corresponding McKean-Vlasov SDE \eqref{repagent} being
\begin{align*}
X_t^{v^*} = X_0 + \int_0^t\left(\bar{b}+q+\phi^{\infty}(s)\right)\left(\mathbb{E}\left[X_t^{v^*}\right] - \bar{X}_s^{i,N}\right)ds+\sigma W_t.
\end{align*}
It is not difficult to verify that \eqref{expestimate} holds when the initial values $X_0^i$ satisfy (i) of Assumption~\ref{ass0-} (see \cite[Page 43, Display (6.6) and below]{Delarue2018FromTM} for concrete arguments), so as discussed in (iv) of Remark~\ref{werecallthis}, we can ignore the non-boundedness of $U_x$ and $U_m$ and recall Theorem~\ref{mainresult}. Then, \eqref{evtcond} holds with $\gamma_t = 0$ as discussed in (ii) of Remark~\ref{anotherremarkwerecall}, with the deterministic normalizing sequences $(a_t^N)_{N \in \mathbb{N}}$ and $(b_t^N)_{N \in \mathbb{N}}$ given by \eqref{bseq} and \eqref{aseq} when the $X_0^i$ are Gaussian, meaning that we also have the convergence \eqref{evtresult} with $\gamma_t = 0$ and the same sequences $(a_t^N)_{N \in \mathbb{N}}$ and $(b_t^N)_{N \in \mathbb{N}}$.
\end{example}
\begin{remark}
The authors in \cite{CFS15} also introduce a default barrier $D < 0$, meaning that
\begin{equation*}
X_t^{i,N,v^{N}} < D \,\,\, \text{at some} \,\, t \in [0, T] \quad \Leftrightarrow \quad \text{the i-th bank is in default at time} \,\, t.
\end{equation*}
In a relatively stable economy, there can be a large number $N$ of banks but no defaults observed until a time $t_0 \in [0, T]$, meaning that at time $t_0$, one has no picture for the probability of default. In that case, methods of Extreme Value Theory extended to our game setup can help with the estimation of the likelihood of the rare (but possibly highly impactful) event of $k$ banks being in default at the future time $t_1 > t_0$. Indeed, replacing each $X^{i,N,v^{N}}$ with $-X^{i,N,v^{N}}$ for each $i$, we obtain the same game with different initial conditions $X_0^i$, and the event of interest has probability
$$\mathbb{P}\left(X_{t_1}^{i_k^N,N,v^{N}} > -D \right) = \mathbb{P}\left(\frac{X_{t_1}^{i_k^N,N,v^{N}} -b_{t_1}^N}{a_{t_1}^N} > \frac{-D -b_{t_1}^N}{a_{t_1}^N} \right).$$ Taking $D = D_N = -a_{t_1}^NM - b_{t_1}^N$ with $M$ not extremely large compared to $N$, the large-$N$ behavior of the above probability of interest is governed by \eqref{evtresult} given by Theorem~\ref{mainresult}. Of course, this is a simplified regime where the the state processes are not stopped upon hitting the default barrier; applying the same idea to this more complex setup (which could also incorporate other realistic features like common noise) could be possible, but this is beyond the scope of this paper. Moreover, the terms $a_{t_1}^N$ and $b_{t_1}^N$ of the normalizing sequences need to be known, but an extension of statistical methods of Extreme Value Theory to the regime of Mean-Field Systems and Stochastic Differential Games (which is in our research plans for the near future) would allow for the estimation of $a_{t}^N$ and $b_{t}^N$ for $t \leq t_0$ (supposing that the states are fully observed until the present time $t_0$), and the prediction of $a_{t_1}^N$ and $b_{t_1}^N$ at the future time $t_1 > t_0$ should then be possible by using tools like regression. Finally, it should be noted that the above analysis does not require knowledge of functions that drive the Nash states (e.g $\hat{b}(x, m,y)$), meaning that it could be applicable to game models for systemic risk where the coefficients in equilibrium are not computable.
\end{remark}

To prove Theorem~\ref{lem1} and Theorem~\ref{mainresult}, we present a few lemmata. First, because the proof of Theorem~\ref{lem1} is very long and highly computational, we isolate some of its computations into the following lemma:
\begin{lemma}\label{lem0}
We define $$\mathcal{U}^{i,N}(t, \mathbf{x}) = U\left(t, x_i, \mu_{\mathbf{x}}^N\right).$$ 
Then, for $j, k \in \{1, 2, \ldots, N\}$ we can find functions $r_j^1(t, \mathbf{x})$ and $r_{j, k}^2(t, \mathbf{x})$ with $|r_j^1(t, \mathbf{x})| < \frac{C}{N}$ and $|r_{j, k}^2(t, \mathbf{x})| < \frac{C}{N}$ for all $t \geq 0$ and $\mathbf{x} \in \mathbb{R}^N$, such that
\begin{align}\label{masterderiv2}
0 = \mathcal{U}_{x_k t}^{i,N}(t, \mathbf{x}) &+ \hat{b}_x(x_k, \mu_{\mathbf{x}}^N, \mathcal{U}_{x_k}^{k, N}(t, \mathbf{x}) - r_k^1(t, \mathbf{x}))\mathcal{U}^{i,N}_{x_k}(t, \mathbf{x}) \nonumber \\ 
&+ \hat{f}_x(x_i, \mu_{\mathbf{x}}^N, \mathcal{U}_{x_i}^{i, N}(t, \mathbf{x}) - r_i^1(t, \mathbf{x}))\delta_{i k} + \frac{1}{N}\hat{f}_m(x_i, \mu_{\mathbf{x}}^N, \mathcal{U}_{x_i}^{i, N}(t, \mathbf{x}) - r_i^1(t, \mathbf{x}), x_k) \nonumber \\
&+ \hat{f}_y(x_i, \mu_{\mathbf{x}}^N, \mathcal{U}_{x_i}^{i, N}(t, \mathbf{x}) - r_i^1(t, \mathbf{x}))\mathcal{U}_{x_k x_i}^{i, N}(t, \mathbf{x})\nonumber \\
&+ \sum_{j=1}^N\frac{1}{N}\hat{b}_m(x_j, \mu_{\mathbf{x}}^N, \mathcal{U}_{x_j}^{j, N}(t, \mathbf{x}) - r_j^1(t, \mathbf{x}), x_k)\mathcal{U}^{i,N}_{x_j}(t, \mathbf{x}) \nonumber \\
&+ \sum_{j=1}^N\hat{b}_y(x_j, \mu_{\mathbf{x}}^N, \mathcal{U}_{x_j}^{j, N}(t, \mathbf{x}) - r_j^1(t, \mathbf{x}))\mathcal{U}^{i,N}_{x_j}(t, \mathbf{x})\mathcal{U}_{x_k x_j}^{j, N}(t, \mathbf{x}) \nonumber \\
&+ \sum_{j=1}^N\hat{b}(x_j, \mu_{\mathbf{x}}^N, U_{x}(t, x_j, \mu_{\mathbf{x}}^N))\mathcal{U}^{i,N}_{x_k x_j}(t, \mathbf{x}) + \frac{\sigma^2}{2}\sum_{j = 1}^N\mathcal{U}^{i,N}_{x_k x_j x_j}(t, \mathbf{x}) - r_{i, k}^2(t, \mathbf{x}),
\end{align}
for all $i, k \in \{1, 2, \ldots, N\}$ and all $t \geq 0$ and $\mathbf{x} \in \mathbb{R}^N$.
\end{lemma}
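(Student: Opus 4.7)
The plan is to start from the Master equation \eqref{master} in the form obtained after replacing the pointwise minimum with its value at the minimizer $\hat{v}(x,m,U_x)$, namely
\begin{equation*}
0 = U_t + \tilde{b}(x,m,U_x)\,U_x + \tilde{f}(x,m,U_x) + \tfrac{\sigma^2}{2}U_{xx} + \int U_m(t,x,m,z)\,\tilde{b}(z,m,U_x(t,z,m))\,m(dz) + \tfrac{\sigma^2}{2}\int U_{mz}(t,x,m,z)\,m(dz),
\end{equation*}
evaluate this identity at $(t,x,m)=(t,x_i,\mu_{\mathbf{x}}^N)$ so that the integrals turn into $\tfrac{1}{N}\sum_{j=1}^N$ with $z=x_j$, and differentiate the resulting identity with respect to $x_k$. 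Because the left-hand side is identically zero, so is its $x_k$-derivative, and the task reduces to reorganizing the chain-rule expansion into the shape of the right-hand side of \eqref{masterderiv2}.

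The main computational tool is the identity
\begin{equation*}
\partial_{x_k}\!\left[F(t,x_i,\mu_{\mathbf{x}}^N)\right] = F_x(t,x_i,\mu_{\mathbf{x}}^N)\,\delta_{ik} + \tfrac{1}{N}\,F_m(t,x_i,\mu_{\mathbf{x}}^N,x_k),
\end{equation*}
valid for any smooth functional $F=F(t,x,m)$. Applied to $F=U$ it gives $\mathcal{U}^{i,N}_{x_k}=U_x(t,x_i,\mu_{\mathbf{x}}^N)\delta_{ik}+\tfrac{1}{N}U_m(t,x_i,\mu_{\mathbf{x}}^N,x_k)$, so setting $r^1_k:=\tfrac{1}{N}U_m(t,x_k,\mu_{\mathbf{x}}^N,x_k)$ gives $U_x(t,x_k,\mu_{\mathbf{x}}^N)=\mathcal{U}^{k,N}_{x_k}-r^1_k$, with $|r^1_k|\leq C/N$ by Assumption~\ref{ass0}(iv); this is exactly how $r^1$ enters the arguments of $\tilde{b}$ and $\tilde{f}$ in \eqref{masterderiv2}. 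Applied systematically, the identity splits every $\partial_{x_k}$ into a $\delta_{ik}$ piece (from the $x$-slot) and a $\tfrac{1}{N}$ piece (from the measure slot), while each differentiation of $\tilde{b}(x_j,\mu_{\mathbf{x}}^N,V_j)$ with $V_j:=U_x(t,x_j,\mu_{\mathbf{x}}^N)$ further splits into $\tilde{b}_x\delta_{jk}$, $\tfrac{1}{N}\tilde{b}_m$, and $\tilde{b}_y\partial_{x_k}V_j$ contributions.

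The resulting collection of terms is then regrouped to match the right-hand side of \eqref{masterderiv2}: for example, the $\tilde{b}_x$ contributions from $\partial_{x_k}[\tilde{b}(x_i,\mu_{\mathbf{x}}^N,V_i)V_i]$ combine with the $j=k$ slice of $\partial_{x_k}\bigl[\tfrac{1}{N}\sum_{j} U_m\,\tilde{b}(x_j,\cdot,V_j)\bigr]$ into the full $\tilde{b}_x(x_k,\mu_{\mathbf{x}}^N,\mathcal{U}^{k,N}_{x_k}-r^1_k)\,\mathcal{U}^{i,N}_{x_k}$; the derivatives of $\tilde{f}(x_i,\mu_{\mathbf{x}}^N,V_i)$ directly give the $\tilde{f}_x\delta_{ik}+\tfrac{1}{N}\tilde{f}_m+\tilde{f}_y\mathcal{U}^{i,N}_{x_k x_i}$ block; the $\tilde{b}_m$ pieces assemble into $\sum_{j}\tfrac{1}{N}\tilde{b}_m\,\mathcal{U}^{i,N}_{x_j}$; the $\tilde{b}_y\partial_{x_k}V_j$ pieces into $\sum_{j}\tilde{b}_y\,\mathcal{U}^{i,N}_{x_j}\,\mathcal{U}^{j,N}_{x_k x_j}$; and the $\tilde{b}\,\partial_{x_k}V_i$ piece together with the $\tfrac{1}{N}\sum_{j}U_{mz}\,\tilde{b}\,\delta_{jk}$ from the measure integral yield $\sum_{j}\tilde{b}\,\mathcal{U}^{i,N}_{x_k x_j}$, while the Laplacian block of the Master equation similarly produces $\tfrac{\sigma^2}{2}\sum_{j}\mathcal{U}^{i,N}_{x_k x_j x_j}$. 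The leftover pieces---cross terms with two measure differentiations carrying a $1/N^2$ factor over only $N$ summands, the $\partial_{x_k}(r^1_j)$ corrections produced by writing $V_j=\mathcal{U}^{j,N}_{x_j}-r^1_j$ inside the nonlinear arguments, and the $O(1/N)$ gap between $\partial_{x_k}V_i$ and $\mathcal{U}^{i,N}_{x_k x_i}$---are absorbed into $r^2_{i,k}(t,\mathbf{x})$, and Assumption~\ref{ass0}(ii)--(v) then delivers the uniform bound $|r^2_{i,k}|\leq C/N$. The main obstacle is purely organizational: the expansion produces many terms of identical structural type, and one has to verify that the regrouping into \eqref{masterderiv2} is exact and that no $O(1)$ contribution is misplaced.
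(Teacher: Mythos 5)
Your proposal takes essentially the same route as the paper's proof: evaluate the minimized Master equation at $(t,x_i,\mu_{\mathbf{x}}^N)$ so the measure integrals become empirical averages, differentiate in $x_k$ using the identity $\partial_{x_k}F(t,x_i,\mu_{\mathbf{x}}^N)=F_x\delta_{ik}+\tfrac{1}{N}F_m(\cdot,x_k)$, set $r^1_j=\tfrac{1}{N}U_m(t,x_j,\mu_{\mathbf{x}}^N,x_j)$ so that $U_x(t,x_j,\mu_{\mathbf{x}}^N)=\mathcal{U}^{j,N}_{x_j}-r^1_j$, and absorb the leftover $O(1/N)$ and $O(1/N^2)$ pieces (the gaps between $\partial_{x_k}U_x(t,x_j,\mu_{\mathbf{x}}^N)$ and $\mathcal{U}^{j,N}_{x_k x_j}$, the second-measure-derivative corrections, and the replacement $U_x\mapsto\mathcal{U}^{j,N}_{x_j}-r^1_j$ inside the nonlinear arguments) into $r^2_{i,k}$, bounded via Assumption~\ref{ass0}. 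The regrouping you sketch — e.g.\ the $\tilde b_x$ contributions from the $j=k$ slices summing to $\tilde b_x(x_k,\cdot,\mathcal{U}^{k,N}_{x_k}-r^1_k)\,\mathcal{U}^{i,N}_{x_k}$ — is exactly what the paper carries out explicitly through its relations \eqref{derivrel1}--\eqref{derivrel4}, so the argument is correct and matches.
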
  

\noindent Next, for the proof of Theorem~\ref{mainresult}, we will make use of the following lemma, which gives efficient bounds for the moments of a solution to a McKean-Vlasov SDE, and for the moments of the Wasserstein distances of the law of the solution to that SDE from the empirical measures that approximate this law. Some of the bounds obtained are natural and probably already known, but they are included for the sake of completeness.
\begin{lemma}\label{lem2}
Let $Z^{1}_0, Z^{2}_0, \ldots$ be a sequence of independent random variables with a common law $\mu_0 \in \mathcal{P}(\mathbb{R})$, and consider the empirical measures
\begin{align}
\mu^{N, Z^N}_t = \frac{1}{N}\sum_{\ell=1}^N\delta_{Z_t^{\ell,N}} \quad \text{and} \quad \mu^{N, Z}_t = \frac{1}{N}\sum_{\ell = 1}^N\delta_{Z_t^{\ell}}
\end{align}
where the dynamics of the particles $Z^{i, N}$ are given by the system
\begin{equation} \label{eq_particle_SDE}
Z^{i, N}_t = Z^{i}_0 + \int_0^tB\left(s, Z^{i, N}_{s}, \mu^{N, Z^N}_s \right) ds + W^i_t, \quad i = 1, 2, \ldots, N
\end{equation}
and $Z^i = (Z_t^{i})_{t \in [0, T]}$ is the solution to the corresponding McKean-Vlasov SDE:
\begin{equation}
\begin{aligned} \label{eq_McKean_SDE}
Z_t &= Z_0 + \int_0^tB\left(s, Z_s, \mathcal{L}\left(Z_s\right) \right) ds + W_t \\
\mathcal{L}\left(Z_0\right) &= \mu_0.
\end{aligned}
\end{equation}
when $(W_t)_{t \in [0, T]}$ and $Z_0$ are replaced by $(W_t^i)_{t \in [0, T]}$ and $Z_0^i$ respectively, for each $i \in \{1, 2, \ldots, N\}$. Suppose that there exist $\kappa_0, L > 0$ such that
\begin{equation}\label{initexpo}
\mathbb{E}\left[e^{\kappa_0Z_0^2}\right] < \infty
\end{equation}
and
\begin{align}\label{lip}
&\left|B(t, z_1, m_1) - B(t, z_2, m_2)\right| \leq L\left(|z_1 - z_2| + \mathcal{W}_1\left(m_1, m_2\right)\right)
\end{align}
for any two pairs $(z_1, m_1), (z_2, m_2) \in \mathbb{R} \times (\mathcal{P}(\mathbb{R}), \mathcal{W}_1)$ and any $t \in [0, T]$. Then, we have that:
\begin{equation}\label{mckeanmomentbound}
\mathbb{E}\left[Z_t^p\right] \leq C^pp\Gamma\left(\frac{p}{2}\right)
\end{equation}
and
\begin{equation}\label{t-wassbound2}
\mathbb{E}\left[\mathcal{W}_1^p\left(\mu^{N, Z}_t, \mathcal{L}\left(Z_t\right)\right)\right] \leq \frac{C^pp\Gamma\left(\frac{p}{2}\right)}{N^{\frac{p}{2}}}.
\end{equation}
and also
\begin{equation}\label{t-wassbound}
\mathbb{E}\left[\mathcal{W}_1^p\left(\mu^{N, Z^N}_t, \mathcal{L}\left(Z_t\right)\right)\right] \leq \frac{C^pp\Gamma\left(\frac{p}{2}\right)}{N^{\frac{p}{2}}}.
\end{equation} 
for any $t \in [0, T]$, any $p \geq 1$, and some $C > 0$ that does not depend on $t$ or $p$.
\end{lemma}

\noindent Finally, we present the lemma that is used for all the reductions in the proof of Theorem~\ref{mainresult}, the proof of which uses Pinsker's inequality.  
\begin{lemma}\label{lem3}
For each $N \in \mathbb{N}$, we consider the $N$-dimensional processes $$\mathbf{Z}^{N,k} = (Z^{1,N,k}, Z^{2,N,k}, \ldots, Z^{N,N,k})$$ for $k \in \{1, 2\}$, with $Z^{i,N,k} = (Z_t^{i,N,k})_{t \in [0, T]}$, which are taken to be the pathwise unique strong solutions to the $N$-dimensional SDEs:
\begin{align}\label{sys1}
\mathbf{Z}_t^{N, 1} = \mathbf{Z}_0^N + \int_0^t\mathbf{A}^{N}\left(s, \mathbf{Z}_s^{N,1}\right)\left(\mathbf{B}^{N,1}\left(s, \mathbf{Z}_s^{N,1}\right)ds + d\mathbf{W}^N_s\right) + \int_0^t\mathbf{C}^{N}\left(s, \mathbf{Z}_s^{N,1}\right)ds
\end{align}
and
\begin{align}\label{sys2}
\mathbf{Z}_t^{N, 2} = \mathbf{Z}_0^N + \int_0^t\mathbf{A}^{N}\left(s, \mathbf{Z}_s^{N,2}\right)\left(\mathbf{B}^{N,2}\left(s, \mathbf{Z}_s^{N,2}\right)ds + d\mathbf{W}^N_s\right) + \int_0^t\mathbf{C}^{N}\left(s, \mathbf{Z}_s^{N,2}\right)ds.
\end{align}
In the above, for $k \in \{1,2\}$, $\mathbf{B}^{N,k}(\cdot)$ and $\mathbf{C}^N(\cdot)$ are measurable $\mathbb{R}^N$-valued functions with $\mathbf{B}^{N,k}(\cdot)$ being locally bounded, $\mathbf{A}(\cdot)$ is a measurable function with values in the space of $N\times N$ matrices, and $\mathbf{W}^N := (W^1, W^2, \ldots, W^N)$. Suppose that for all $N \in \mathbb{N}$ and all $s \in [0, T]$ we have:
\begin{align}\label{momentcond1}
&\int_0^T\mathbb{E}\left[\left\Vert\mathbf{B}^{N,2}\left(s, \mathbf{Z}_s^{N,2}\right) - \mathbf{B}^{N,1}\left(s, \mathbf{Z}_s^{N,2}\right)\right\Vert_{2}^2\right]ds \leq \frac{C}{N}.
\end{align}
for some $C > 0$ that does not depend on $N$. Then, it holds that
\begin{align}\label{jjjjj}
\left|\mathbb{E}\left[H^N\left(\mathbf{Z}_t^{N,1}\right)\right] - \mathbb{E}\left[H^N\left(\mathbf{Z}_t^{N,2}\right)\right]\right| \to 0
\end{align}
as $N \to \infty$, for any measurable function $H^N: \mathbb{R}^N \mapsto [0, 1]$ and all $t \in [0, T]$.
\end{lemma}

\section{Proofs}
In this section we place all the proofs of this paper. Between the proofs of our two main results, we will first present the proof of Theorem~\ref{lem1}, but for that we must first prove Lemma~\ref{lem0}. 

\begin{proof}[Proof of Lemma \ref{lem0}]
For a fixed $\mathbf{x} = (x_1, x_2, \ldots, x_N) \in \mathbb{R}^N$ and for each $i \in \{1, 2, \ldots, N\}$, plugging $\mu \rightarrow \mu_{\mathbf{x}}^N$ and $x \rightarrow x_i$ in \eqref{master} and recalling the computations in \cite[Proof of Proposition 4.1]{Delarue2018FromTM} we get:
\begin{align}
0 = \mathcal{U}_t^{i,N}(t, \mathbf{x}) &+ \sum_{j=1}^N\hat{b}(x_j, \mu_{\mathbf{x}}^N, U_x(t, x_j, \mu_{\mathbf{x}}^N))\mathcal{U}^{i,N}_{x_j}(t, \mathbf{x}) + \hat{f}(x_i, \mu_{\mathbf{x}}^N, U_x(t, x_i, \mu_{\mathbf{x}}^N)) \nonumber \\
&+ \frac{\sigma^2}{2}\left(\mathcal{U}^{i,N}_{x_i x_i}(t, \mathbf{x}) - \frac{1}{N}U_{x m}(t, x_i, \mu_{\mathbf{x}}^N, x_i) - \frac{1}{N^2}U_{m m}(t, x_i, \mu_{\mathbf{x}}^N, x_i, x_i)\right)
\nonumber \\
&+ \frac{\sigma^2}{2}\sum_{\substack{j = 1 \\ j \neq i}}^N\left(\mathcal{U}^{i,N}_{x_j x_j}(t, \mathbf{x}) - \frac{1}{N^2}U_{m m}(t, x_i, \mu_{\mathbf{x}}^N, x_j, x_j)\right).
\end{align}
Differentiating the above with respect to $x_k$ we get:
\begin{align}\label{masterderiv}
0 = \mathcal{U}_{x_k t}^{i,N}(t, \mathbf{x}) &+ \hat{b}_x(x_k, \mu_{\mathbf{x}}^N, U_x(t, x_k, \mu_{\mathbf{x}}^N))\mathcal{U}^{i,N}_{x_k}(t, \mathbf{x}) + \hat{f}_x(x_i, \mu_{\mathbf{x}}^N, U_x(t, x_i, \mu_{\mathbf{x}}^N))\delta_{i k} \nonumber \\
&+ \frac{1}{N}\hat{f}_m(x_i, \mu_{\mathbf{x}}^N, U_x(t, x_i, \mu_{\mathbf{x}}^N), x_k) \nonumber \\
&+ \hat{f}_y(x_i, \mu_{\mathbf{x}}^N, U_x(t, x_i, \mu_{\mathbf{x}}^N))\frac{\partial}{\partial x_k} U_x(t, x_i, \mu_{\mathbf{x}}^N)\nonumber \\
&+ \sum_{j=1}^N\frac{1}{N}\hat{b}_m(x_j, \mu_{\mathbf{x}}^N, U_x(t, x_j, \mu_{\mathbf{x}}^N), x_k)\mathcal{U}^{i,N}_{x_j}(t, \mathbf{x}) \nonumber \\
&+ \sum_{j=1}^N\hat{b}_y(x_j, \mu_{\mathbf{x}}^N, U_x(t, x_j, \mu_{\mathbf{x}}^N))\mathcal{U}^{i,N}_{x_j}(t, \mathbf{x})\frac{\partial}{\partial x_k} U_x(t, x_j, \mu_{\mathbf{x}}^N) \nonumber \\
&+ \sum_{j=1}^N\hat{b}(x_j, \mu_{\mathbf{x}}^N, U_x(t, x_j, \mu_{\mathbf{x}}^N))\mathcal{U}^{i,N}_{x_k x_j}(t, \mathbf{x}) + \frac{\sigma^2}{2}\sum_{j = 1}^N\mathcal{U}^{i,N}_{x_k x_j x_j}(t, \mathbf{x}) \nonumber \\
&- \frac{\sigma^2}{2}\frac{\partial}{\partial x_k}\left(\frac{1}{N}U_{x m}(t, x_i, \mu_{\mathbf{x}}^N, x_i) + \frac{1}{N^2}U_{m m}(t, x_i, \mu_{\mathbf{x}}^N, x_i, x_i)\right)
\nonumber \\
&-\frac{\sigma^2}{2}\sum_{\substack{j = 1 \\ j \neq i}}^N\frac{1}{N^2}\frac{\partial}{\partial x_k}U_{m m}(t, x_i, \mu_{\mathbf{x}}^N, x_j, x_j)
\end{align}
where all derivatives exist in a classical sense and we can interchange the derivatives with each other and with integrals by the regularity provided by Assumptions~\ref{ass0} and~\ref{ass0'}. In the above, we have also used that for any differentiable function $F(\mu)$ of $\mu \in \mathcal{P}$ we have
\begin{align}\label{derivexpression}
\frac{\partial}{\partial x_j}F(\mu_{\mathbf{x}}^N) = \frac{1}{N}F_m(\mu_{\mathbf{x}}^N, x_j). 
\end{align}
Using now \eqref{derivexpression} again we can obtain the following relations:
\begin{align}\label{derivrel1}
\frac{\partial}{\partial x_k}U_{m m}(t, x_i, \mu_{\mathbf{x}}^N, x_j, x_j) &= U_{x m m}(t, x_i, \mu_{\mathbf{x}}^N, x_j, x_j)\delta_{i k} + \frac{1}{N}U_{m m m}(t, x_i, \mu_{\mathbf{x}}^N, x_j, x_j, x_k) \nonumber \\
&\qquad + U_{m m z_1}(t, x_i, \mu_{\mathbf{x}}^N, x_j, x_j)\delta_{j k} + U_{m m z_2}(t, x_i, \mu_{\mathbf{x}}^N, x_j, x_j)\delta_{j k}
\end{align}
for any $i, j, k \in \{1, 2, \ldots, N\}$, 
\begin{align}\label{derivrel2}
U_{x}(t, x_j, \mu_{\mathbf{x}}^N) &= \mathcal{U}_{x_j}^{j, N}(t, \mathbf{x}) - \frac{1}{N}U_{m}(t, x_j, \mu_{\mathbf{x}}^N, x_j) 
\end{align}
which implies also that
\begin{align}\label{derivrel3}
\frac{\partial}{\partial x_k}U_{x}(t, x_j, \mu_{\mathbf{x}}^N) &= \mathcal{U}_{x_k x_j}^{j, N}(t, \mathbf{x}) - \frac{1}{N}U_{m x}(t, x_j, \mu_{\mathbf{x}}^N, x_j)\delta_{j k} - \frac{1}{N}U_{m z_1}(t, x_j, \mu_{\mathbf{x}}^N, x_j)\delta_{j k} \nonumber \\
& \qquad\qquad\qquad\qquad\qquad\qquad\qquad\qquad\quad\, - \frac{1}{N^2}U_{m m}(t, x_j, \mu_{\mathbf{x}}^N, x_j, x_k) 
\end{align}
for any $j, k \in \{1, 2, \ldots, N\}$, and finally
\begin{align}\label{derivrel4}
\frac{\partial}{\partial x_k}U_{x m}(t, x_i, \mu_{\mathbf{x}}^N, x_i) = U_{x x m}(t, x_i, \mu_{\mathbf{x}}^N, x_i)\delta_{i k} &+ \frac{1}{N}U_{x m m}(t, x_i, \mu_{\mathbf{x}}^N, x_i, x_k) \nonumber \\
&+ U_{x m z_1}(t, x_i, \mu_{\mathbf{x}}^N, x_i)\delta_{i k}
\end{align}
for any $i, k \in \{1, 2, \ldots, N\}$. Substituting all the above in \eqref{masterderiv} we obtain the desired form with 
\begin{equation}
r_j^1(t, \mathbf{x}) = \frac{1}{N}U_{m}(t, x_j, \mu_{\mathbf{x}}^N, x_j)
\end{equation}
for all $j \in \{1, 2, \ldots, N\}$ and
\begin{align}
r_{i, k}^2(t, \mathbf{x}) = &- \frac{1}{N}\hat{f}_y(x_i, \mu_{\mathbf{x}}^N, \mathcal{U}_{x_i}^{i, N}(t, \mathbf{x}) - r_i^1(t, \mathbf{x})) \nonumber \\
&\qquad\qquad \times\left\{U_{m x}(t, x_i, \mu_{\mathbf{x}}^N, x_i)\delta_{i k} + U_{m z_1}(t, x_i, \mu_{\mathbf{x}}^N, x_i)\delta_{i k}\right\} \nonumber \\
&+ \frac{1}{N^2}\hat{f}_y(x_i, \mu_{\mathbf{x}}^N, \mathcal{U}_{x_i}^{i, N}(t, \mathbf{x}) - r_i^1(t, \mathbf{x}))U_{m m}(t, x_i, \mu_{\mathbf{x}}^N, x_i, x_k)\nonumber \\
&- \frac{1}{N}\hat{b}_y(x_k, \mu_{\mathbf{x}}^N, \mathcal{U}_{x_k}^{k, N}(t, \mathbf{x}) - r_k^1(t, \mathbf{x})) \nonumber \\
&\qquad\qquad \times\left\{U_{m x}(t, x_k, \mu_{\mathbf{x}}^N, x_k) + U_{m z_1}(t, x_k, \mu_{\mathbf{x}}^N, x_k)\right\} \nonumber \\
&+ \frac{1}{N^2}\sum_{j=1}^N\hat{b}_y(x_j, \mu_{\mathbf{x}}^N, \mathcal{U}_{x_j}^{j, N}(t, \mathbf{x}) - r_j^1(t, \mathbf{x}))U_{m m}(t, x_j, \mu_{\mathbf{x}}^N, x_j, x_k)\nonumber \\
&- \frac{\sigma^2}{2}\frac{1}{N}\left(U_{x x m}(t, x_i, \mu_{\mathbf{x}}^N, x_i)\delta_{i k} + \frac{1}{N}U_{x m m}(t, x_i, \mu_{\mathbf{x}}^N, x_i, x_k)\right) \nonumber \\
&- \frac{\sigma^2}{2}\frac{1}{N} U_{x m z_1}(t, x_i, \mu_{\mathbf{x}}^N, x_i)\delta_{i k}
\nonumber \\
&-\frac{\sigma^2}{2}\sum_{j = 1}^N\frac{1}{N^2}\left(U_{x m m}(t, x_i, \mu_{\mathbf{x}}^N, x_j, x_j)\delta_{i k} + \frac{1}{N}U_{m m m}(t, x_i, \mu_{\mathbf{x}}^N, x_j, x_j, x_k)\right) \nonumber \\
&-\frac{\sigma^2}{2}\frac{1}{N^2}\left(U_{m m z_1}(t, x_i, \mu_{\mathbf{x}}^N, x_k, x_k) + U_{m m z_2}(t, x_i, \mu_{\mathbf{x}}^N, x_k, x_k)\right)
\end{align}
for all $i, k \in \{1, 2, \ldots, N\}$, which are both bounded by $\frac{C}{N}$ by Assumptions~\ref{ass0} and~\ref{ass0'} so our proof is complete.
\end{proof}

\noindent We can now prove the first of our two main results.

\begin{proof}[Proof of Theorem~\ref{lem1}]
First, under the notation of Lemma~\ref{lem0}, we define 
\begin{equation}\label{zety}
\begin{aligned}
Y_t^{i,k,N} &= U_{x_k}^{i,N}\left(t, X_t^{1, N, v^{N, *}}, X_t^{2, N, v^{N, *}}, \ldots, X_t^{N, N, v^{N, *}}\right) \\
Z_t^{i,j,k,N} &= U_{x_k x_j}^{i,N}\left(t, X_t^{1, N, v^{N, *}}, X_t^{2, N, v^{N, *}}, \ldots, X_t^{N, N, v^{N, *}}\right) \\
\mathcal{Y}_t^{i,k,N} &= \mathcal{U}_{x_k}^{i,N}\left(t, X_t^{1, N, v^{N, *}}, X_t^{2, N, v^{N, *}}, \ldots, X_t^{N, N, v^{N, *}}\right) \\
\mathcal{Z}_t^{i,j,k,N} &= \mathcal{U}_{x_k x_j}^{i,N}\left(t, X_t^{1, N, v^{N, *}}, X_t^{2, N, v^{N, *}}, \ldots, X_t^{N, N, v^{N, *}}\right).
\end{aligned}
\end{equation}
By Assumptions~\ref{ass0} and~\ref{ass0'}, we can differentiate the $i$-th equation of \eqref{sysPDE} with respect to $x_k$, and use again \eqref{derivexpression} to obtain
\begin{align}\label{sysPDEderiv}
0 = U_{x_k t}^{i,N}(t, \mathbf{x}) &+ \hat{b}_x\left(x_{k}, \mu_{\mathbf{x}}^N, U_{x_{k}}^{k,N}(t, \mathbf{x}) \right)  U_{x_k}^{i,N}(t, \mathbf{x}) + \hat{f}_x\left(x_{i}, \mu_{\mathbf{x}}^N, U_{x_{i}}^{i,N}(t, \mathbf{x}) \right) \delta_{i k} \nonumber \\
&+ \frac{1 }{N} \hat{f}_m\left(x_i, \mu_{\mathbf{x}}^N, U_{x_i}^{i,N}(t, \mathbf{x}), x_k \right) + \hat{f}_y\left(x_{i}, \mu_{\mathbf{x}}^N, U_{x_{i}}^{i,N}(t, \mathbf{x}) \right)U_{x_k x_{i}}^{i,N}(t, \mathbf{x}) \nonumber \\
&+ \sum_{j = 1}^N \hat{b}\left(x_{j}, \mu_{\mathbf{x}}^N, U_{x_{j}}^{j, N}(t, \mathbf{x})\right) U_{x_k x_{j}}^{i,N}(t, \mathbf{x}) \nonumber \\
&+ \sum_{j = 1}^N \frac{1}{N} \hat{b}_m\left(x_{j}, \mu_{\mathbf{x}}^N, U_{x_{j}}^{j, N}(t, \mathbf{x}), x_k\right)U_{x_{j}}^{i,N}(t, \mathbf{x}) \nonumber \\
&+ \sum_{j = 1}^N \hat{b}_y\left(x_{j}, \mu_{\mathbf{x}}^N, U_{x_{j}}^{j, N}(t, \mathbf{x})\right)U_{x_{j}}^{i,N}(t, \mathbf{x})U_{x_k x_{j}}^{j, N}(t, \mathbf{x}) + \frac{\sigma^2}{2}\sum_{j = 1}^NU_{x_k x_{j} x_{j}}^{i, N}(t, \mathbf{x}),
\end{align}
along with terminal conditions 
\begin{equation}
U_{x_k}^{i,N}(T, \mathbf{x}) = \frac{\partial }{\partial x_k}g\left(x_i, \mu_{\mathbf{x}}^N\right).
\end{equation}
Therefore, recalling Assumptions~\ref{ass0} and~\ref{ass0'} again, we can apply It\^{o}'s formula on the process $U^{i,N}_{x_k}(t, X_t^{1, N, v^{N, *}}, X_t^{2, N, v^{N, *}}, \ldots, X_t^{N, N, v^{N, *}})$ and use \eqref{systemNash} and then \eqref{sysPDEderiv} to find that
\begin{align}\label{It\^{o}Y}
dY_t^{i,k,N} &= U_{x_k t}^{i,N}\left(t, X_t^{1, N, v^{N, *}}, X_t^{2, N, v^{N, *}}, \ldots, X_t^{N, N, v^{N, *}}\right)dt \nonumber \\
& \quad + \sum_{j=1}^NU_{x_k x_j}^{i,N}\left(t, X_t^{1, N, v^{N, *}}, X_t^{2, N, v^{N, *}}, \ldots, X_t^{N, N, v^{N, *}}\right)\nonumber \\
&\qquad \qquad \times \hat{b}\left(X_t^{j, N, v^{N,*}}, \mu_t^{N, v^{N,*}}, U_{x_j}^{j, N}\left(t, X_t^{1, N, v^{N, *}}, X_t^{2, N, v^{N, *}}, \ldots, X_t^{N, N, v^{N, *}}\right)\right)dt \nonumber \\
& \quad + \frac{\sigma^2}{2}\sum_{j=1}^NU_{x_k x_j x_j}^{i,N}\left(t, X_t^{1, N, v^{N, *}}, X_t^{2, N, v^{N, *}}, \ldots, X_t^{N, N, v^{N, *}}\right)dt + \sigma\sum_{j=1}^NZ_t^{i,j,k,N}dW_t^j \nonumber \\
&= -\hat{b}_x\left(X_t^{k, N, v^{N, *}}, \mu_{t}^{N, v^{N, *}}, Y_t^{k, k, N} \right)Y_t^{i, k, N}dt - \hat{f}_x\left(X_t^{i, N, v^{N, *}}, \mu_{t}^{N, v^{N, *}}, Y_t^{i, i, N} \right) \delta_{i k}dt \nonumber \\
&\quad- \frac{1 }{N} \hat{f}_m\left(X_t^{i, N, v^{N, *}}, \mu_{t}^{N, v^{N, *}}, Y_t^{i, i, N}, X_t^{k, N, v^{N, *}} \right)dt \nonumber \\
&\quad - \hat{f}_y\left(X_t^{i, N, v^{N, *}}, \mu_{t}^{N, v^{N, *}}, Y_t^{i, i, N} \right)Z_t^{i,i,k,N}dt \nonumber \\
&\quad - \frac{1}{N}\sum_{j = 1}^N \hat{b}_m\left(X_t^{j, N, v^{N, *}}, \mu_{t}^{N, v^{N, *}}, Y_t^{j, j, N}, X_t^{k, N, v^{N, *}}\right)Y_t^{i, j, N}dt \nonumber \\
&\quad - \sum_{j = 1}^N \hat{b}_y\left(X_t^{j, N, v^{N, *}}, \mu_{t}^{N, v^{N, *}}, Y_t^{j, j, N}\right)Y_t^{i,j,N}Z_t^{j, j, k, N}dt + \sigma\sum_{j=1}^NZ_t^{i,j,k,N}dW_t^j. 
\end{align}
On the other hand, applying It\^{o}'s formula on $\mathcal{U}^{i,N}_{x_k}(t, X_t^{1, N, v^{N, *}}, X_t^{2, N, v^{N, *}}, \ldots, X_t^{N, N, v^{N, *}})$ and using \eqref{systemNash} and then Lemma~\ref{lem0} we obtain
\begin{align}\label{It\^{o}YCal}
d\mathcal{Y}_t^{i, k, N} &= \mathcal{U}_{x_k t}^{i,N}\left(t, X_t^{1, N, v^{N, *}}, X_t^{2, N, v^{N, *}}, \ldots, X_t^{N, N, v^{N, *}}\right)dt \nonumber \\
& \quad + \sum_{j=1}^N\mathcal{U}_{x_k x_j}^{i,N}\left(t, X_t^{1, N, v^{N, *}}, X_t^{2, N, v^{N, *}}, \ldots, X_t^{N, N, v^{N, *}}\right)\nonumber \\
&\quad \qquad \quad \times \hat{b}\left(X_t^{j, N, v^{N,*}}, \mu_t^{N, v^{N,*}}, U_{x_j}^{j, N}\left(t, X_t^{1, N, v^{N, *}}, X_t^{2, N, v^{N, *}}, \ldots, X_t^{N, N, v^{N, *}}\right)\right)dt \nonumber \\
& \quad + \frac{\sigma^2}{2}\sum_{j=1}^N\mathcal{U}_{x_k x_j x_j}^{i,N}\left(t, X_t^{1, N, v^{N, *}}, X_t^{2, N, v^{N, *}}, \ldots, X_t^{N, N, v^{N, *}}\right)dt + \sigma\sum_{j=1}^N\mathcal{Z}_t^{i, j, k, N}dW_t^j \nonumber \\
&= -\hat{b}_x(X_t^{k, N, v^{N, *}}, \mu_t^{N, v^{N,*}}, \mathcal{Y}_t^{k,k,N} - R_t^{k,1})\mathcal{Y}_t^{i,k,N}dt \nonumber \\
&\quad -\hat{f}_x(X_t^{i, N, v^{N, *}}, \mu_t^{N, v^{N,*}}, \mathcal{Y}_t^{i,i,N} - R_t^{i,1})\delta_{i k}dt \nonumber \\
&\quad - \frac{1}{N}\hat{f}_m(X_t^{i, N, v^{N, *}}, \mu_t^{N, v^{N,*}}, \mathcal{Y}_t^{i,i,N} - R_t^{i,1}, X_t^{k, N, v^{N, *}})dt \nonumber \\
&\quad - \hat{f}_y(X_t^{i, N, v^{N, *}}, \mu_t^{N, v^{N,*}}, \mathcal{Y}_t^{i,i,N} - R_t^{i,1})\mathcal{Z}_{t}^{i, i, k, N}dt \nonumber \\
&\quad -\sum_{j=1}^N\frac{1}{N}\hat{b}_m(X_t^{j, N, v^{N, *}}, \mu_t^{N, v^{N,*}}, \mathcal{Y}_t^{j,j,N} - R_t^{j,1}, X_t^{k, N, v^{N, *}})\mathcal{Y}_t^{i, j, N}dt \nonumber \\
&\quad -\sum_{j=1}^N\hat{b}_y(X_t^{j, N, v^{N, *}}, \mu_t^{N, v^{N,*}}, \mathcal{Y}_t^{j,j,N} - R_t^{j,1})\mathcal{Y}_t^{i, j, N}\mathcal{Z}_{t}^{j, j, k, N}dt \nonumber \\
&\quad + R_t^{i,k,2}dt + \sigma\sum_{j=1}^N\mathcal{Z}_t^{i, j, k, N}dW_t^j
\end{align}
where for $j, k \in \{1, 2, \ldots, N\}$ we define $$R_t^{j,1} = r_j^1\left(t, X_t^{1, N, v^{N, *}}, X_t^{2, N, v^{N, *}}, \ldots, X_t^{N, N, v^{N, *}}\right)$$ and $$R_t^{j,k,2} = r_{j,k}^2\left(t, X_t^{1, N, v^{N, *}}, X_t^{2, N, v^{N, *}}, \ldots, X_t^{N, N, v^{N, *}}\right)$$ which are both bounded by $\frac{C}{N}$. As we have $$Y_T^{i, k, N} = \mathcal{Y}_T^{i, k, N} = \frac{\partial }{\partial x_k}g\left(x_i, \mu_{\mathbf{x}}^N\right),$$ subtracting \eqref{It\^{o}Y} from \eqref{It\^{o}YCal} and applying It\^{o}'s formula on $(\mathcal{Y}_t^{i, k, N} - Y_t^{i, k, N})^2$ we obtain:
\begin{align}\label{pdyn}
&\left(\mathcal{Y}_t^{i, k, N} - Y_t^{i, k, N}\right)^2 \nonumber \\
& \qquad = 2\int_t^T\left(\mathcal{Y}_s^{i, k, N} - Y_s^{i, k, N}\right)\Bigg\{\hat{b}_x(X_s^{k, N, v^{N, *}}, \mu_s^{N, v^{N,*}}, \mathcal{Y}_s^{k,k,N} - R_s^{k,1})\mathcal{Y}_s^{i,k,N} \nonumber \\
&\qquad\qquad\qquad\qquad\qquad\qquad\qquad\qquad\qquad\, -\hat{b}_x\left(X_s^{k, N, v^{N, *}}, \mu_{s}^{N, v^{N, *}}, Y_s^{k, k, N} \right)Y_s^{i, k, N}\Bigg\}ds \nonumber \\
&\qquad \qquad + 2\int_t^T\left(\mathcal{Y}_s^{i, k, N} - Y_s^{i, k, N}\right)\Bigg\{\hat{f}_x(X_s^{i, N, v^{N, *}}, \mu_s^{N, v^{N,*}}, \mathcal{Y}_s^{i,i,N} - R_s^{i,1})\delta_{i k} \nonumber \\
&\qquad\qquad\qquad\qquad\qquad\qquad\qquad\qquad\qquad\qquad\, -\hat{f}_x\left(X_s^{i, N, v^{N, *}}, \mu_{s}^{N, v^{N, *}}, Y_s^{i, i, N} \right) \delta_{i k}\Bigg\}ds \nonumber \\
&\qquad \qquad + \frac{2}{N}\int_t^T\left(\mathcal{Y}_s^{i, k, N} - Y_s^{i, k, N}\right) \nonumber \\
&\qquad\qquad\qquad\qquad\qquad\times\Bigg\{\hat{f}_m(X_s^{i, N, v^{N, *}}, \mu_s^{N, v^{N,*}}, \mathcal{Y}_s^{i,i,N} - R_s^{i,1}, X_s^{k, N, v^{N, *}}) \nonumber \\
&\qquad\qquad\qquad\qquad\qquad\qquad\qquad\quad -\hat{f}_m\left(X_s^{i, N, v^{N, *}}, \mu_{s}^{N, v^{N, *}}, Y_s^{i, i, N}, X_s^{k, N, v^{N, *}} \right)\Bigg\}ds \nonumber \\
&\qquad \qquad + 2\int_t^T\left(\mathcal{Y}_s^{i, k, N} - Y_s^{i, k, N}\right)\Bigg\{\hat{f}_y(X_s^{i, N, v^{N, *}}, \mu_s^{N, v^{N,*}}, \mathcal{Y}_s^{i,i,N} - R_s^{i,1})\mathcal{Z}_{s}^{i, i, k, N} \nonumber \\
&\qquad\qquad\qquad\qquad\qquad\qquad\qquad\qquad\qquad\qquad\, -\hat{f}_y\left(X_s^{i, N, v^{N, *}}, \mu_{s}^{N, v^{N, *}}, Y_s^{i, i, N} \right)Z_s^{i,i,k,N}\Bigg\}ds \nonumber \\
&\qquad \qquad + \frac{2}{N}\int_t^T\left(\mathcal{Y}_s^{i, k, N} - Y_s^{i, k, N}\right) \nonumber \\
&\qquad\qquad\qquad\qquad\qquad\times\sum_{j=1}^N\Bigg\{\hat{b}_m(X_s^{j, N, v^{N, *}}, \mu_s^{N, v^{N,*}}, \mathcal{Y}_s^{j,j,N} - R_s^{j,1}, X_s^{k, N, v^{N, *}})\mathcal{Y}_s^{i, j, N} \nonumber \\
&\qquad\qquad\qquad\qquad\qquad\qquad\qquad\quad -\hat{b}_m\left(X_s^{j, N, v^{N, *}}, \mu_{s}^{N, v^{N, *}}, Y_s^{j, j, N}, X_s^{k, N, v^{N, *}}\right)Y_s^{i, j, N}\Bigg\}ds \nonumber \\
&\qquad \qquad + 2\int_t^T\left(\mathcal{Y}_s^{i, k, N} - Y_s^{i, k, N}\right) \nonumber \\
&\qquad\qquad\qquad\qquad\qquad\times\sum_{j=1}^N\Bigg\{\hat{b}_y(X_s^{j, N, v^{N, *}}, \mu_s^{N, v^{N,*}}, \mathcal{Y}_s^{j,j,N} - R_s^{j,1})\mathcal{Y}_s^{i, j, N}\mathcal{Z}_{s}^{j, j, k, N} \nonumber \\
&\qquad\qquad\qquad\qquad\qquad\qquad\qquad\quad - \hat{b}_y\left(X_s^{j, N, v^{N, *}}, \mu_{s}^{N, v^{N, *}}, Y_s^{j, j, N}\right)Y_s^{i,j,N}Z_s^{j, j, k, N}\Bigg\}ds \nonumber \\
&\qquad \qquad - 2\sigma\int_t^T\left(\mathcal{Y}_s^{i, k, N} - Y_s^{i, k, N}\right)\sum_{j=1}^N\left(\mathcal{Z}_s^{i, j, k, N} - Z_s^{i, j, k, N}\right)dW_s^j \nonumber \\
&\qquad \qquad - \sigma^2\int_t^T\sum_{j=1}^N\left(\mathcal{Z}_s^{i, j, k, N} - Z_s^{i, j, k, N}\right)^2ds.
\end{align}
Next, by \eqref{derivexpression} we have 
\begin{align}
\mathcal{U}_{x_j}^{i, N}(t, \mathbf{x}) = U_{x}(t, x_i, \mu_{\mathbf{x}}^N)\delta_{i j} + \frac{1}{N}U_{m}(t, x_i, \mu_{\mathbf{x}}^N, x_j) 
\end{align}
for any $i, j \in \{1, 2, \ldots, N\}$, which also implies that
\begin{align}
\mathcal{U}_{x_k x_j}^{i, N}(t, \mathbf{x}) = U_{x x}(t, x_i, \mu_{\mathbf{x}}^N)\delta_{i j}\delta_{i k} &+ \frac{1}{N}U_{m x}(t, x_i, \mu_{\mathbf{x}}^N, x_k)\delta_{i j} + \frac{1}{N}U_{m x}(t, x_i, \mu_{\mathbf{x}}^N, x_j)\delta_{i k} \nonumber \\
&- \frac{1}{N^2}U_{m m}(t, x_i, \mu_{\mathbf{x}}^N, x_j, x_k) + \frac{1}{N}U_{m z_1}(t, x_i, \mu_{\mathbf{x}}^N, x_j)\delta_{j k}
\end{align}
for any $i, j, k \in \{1, 2, \ldots, N\}$, so the boundedness of all the second order derivatives except $U_{mz_1}$ in Assumption~\ref{ass0} and the boundedness of $U_{mz_1}$ in Assumption~\ref{ass0'} imply that the processes $\mathcal{Y}_s^{i,j,N}$ and $\mathcal{Z}_s^{i,j,k,N}$ defined in \eqref{zety} satisfy 
\begin{equation}\label{bestdecays}
\begin{aligned}
\left|\mathcal{Y}_s^{i,j,N}\right| &\leq C\left(\frac{1}{N} + \delta_{i j}\right), \\
\left|\mathcal{Z}_s^{i,j,k,N}\right| &\leq C\left(\frac{1}{N^2} + \frac{1}{N}\delta_{i j} + \frac{1}{N}\delta_{i k} + \frac{1}{N}\delta_{j k} + \delta_{i j}\delta_{i k}\right).
\end{aligned}
\end{equation}
Hence, we can write 
\begin{align}\label{bracketbound1}
&\left|\hat{b}_x(X_s^{k, N, v^{N, *}}, \mu_s^{N, v^{N,*}}, \mathcal{Y}_s^{k,k,N} - R_s^{k,1})\mathcal{Y}_s^{i,k,N} -\hat{b}_x\left(X_s^{k, N, v^{N, *}}, \mu_{s}^{N, v^{N, *}}, Y_s^{k, k, N} \right)Y_s^{i, k, N}\right| \nonumber \\
&\qquad \qquad \leq \left\Vert \hat{b}_x \right\Vert_{\infty} \left|\mathcal{Y}_s^{i,k,N} - Y_s^{i, k, N}\right| + \left\Vert \hat{b}_{xy} \right\Vert_{\infty} \left|\mathcal{Y}_s^{k,k,N} - R_s^{k,1} - Y_s^{k, k, N}\right|\left|\mathcal{Y}_s^{i,k,N}\right| \nonumber \\
&\qquad \qquad \leq C\left\{\left|\mathcal{Y}_s^{i,k,N} - Y_s^{i, k, N}\right| + \left(\left|\mathcal{Y}_s^{k,k,N} - Y_s^{k, k, N}\right| + \frac{1}{N}\right)\left(\frac{1}{N} + \delta_{i k}\right)\right\}
\end{align}
where the first inequality is obtained by bounding the distance of the two terms in the first line by the sum of the distances of those terms from $\hat{b}_x(X_s^{k, N, v^{N, *}}, \mu_{s}^{N, v^{N, *}}, Y_s^{k, k, N})\mathcal{Y}_s^{i, k, N}$, and by using then the mean value theorem, Assumption~\ref{ass0}, Assumption~\ref{ass0'} and the $\mathcal{O}(\frac{1}{N})$ bounds of the processes $|R_t^{i,1}|$ and $|R_t^{i,k,2}|$. Similarly, we can obtain:
\begin{align}\label{bracketbound2}
&\left|\hat{f}_x(X_s^{i, N, v^{N, *}}, \mu_s^{N, v^{N,*}}, \mathcal{Y}_s^{i,i,N} - R_s^{i,1})\delta_{i k} - \hat{f}_x\left(X_s^{i, N, v^{N, *}}, \mu_{s}^{N, v^{N, *}}, Y_s^{i, i, N} \right) \delta_{i k}\right| \nonumber \\
&\qquad \qquad \leq \left\Vert \hat{f}_{xy} \right\Vert_{\infty} \left|\mathcal{Y}_s^{i,i,N} - R_s^{i,1} - Y_s^{i, i, N}\right|\delta_{i k} \nonumber \\
&\qquad \qquad \leq C\left(\left|\mathcal{Y}_s^{i,i,N} - Y_s^{i, i, N}\right| + \frac{1}{N}\right)\delta_{i k},
\end{align}
\begin{align}\label{bracketbound3}
&\left|\hat{f}_m(X_s^{i, N, v^{N, *}}, \mu_s^{N, v^{N,*}}, \mathcal{Y}_s^{i,i,N} - R_s^{i,1}, X_s^{k, N, v^{N, *}}) - \hat{f}_m\left(X_s^{i, N, v^{N, *}}, \mu_{s}^{N, v^{N, *}}, Y_s^{i, i, N}, X_s^{k, N, v^{N, *}} \right)\right| \nonumber \\
&\qquad \qquad \leq \left\Vert \hat{f}_{my} \right\Vert_{\infty} \left|\mathcal{Y}_s^{i,i,N} - R_s^{i,1} - Y_s^{i, i, N}\right| \nonumber \\
&\qquad \qquad \leq C\left(\left|\mathcal{Y}_s^{i,i,N} - Y_s^{i, i, N}\right| + \frac{1}{N}\right),
\end{align}
\begin{align}\label{bracketbound4}
&\left|\hat{f}_y(X_s^{i, N, v^{N, *}}, \mu_s^{N, v^{N,*}}, \mathcal{Y}_s^{i,i,N} - R_s^{i,1})\mathcal{Z}_{s}^{i, i, k, N} - \hat{f}_y\left(X_s^{i, N, v^{N, *}}, \mu_{s}^{N, v^{N, *}}, Y_s^{i, i, N} \right)Z_s^{i,i,k,N}\right| \nonumber \\
&\qquad \qquad \leq \left\Vert \hat{f}_y \right\Vert_{\infty} \left|\mathcal{Z}_s^{i,i,k,N} - Z_s^{i, i, k, N}\right| + \left\Vert \hat{f}_{yy} \right\Vert_{\infty} \left|\mathcal{Y}_s^{i,i,N} - R_s^{i,1} - Y_s^{i, i, N}\right|\left|\mathcal{Z}_s^{i,i,k,N}\right| \nonumber \\
&\qquad \qquad \leq C\left\{\left|\mathcal{Z}_s^{i,i,k,N} - Z_s^{i, i, k, N}\right| + \left(\left|\mathcal{Y}_s^{i,i,N} - Y_s^{i, i, N}\right| + \frac{1}{N}\right)\left(\frac{1}{N} + \delta_{i k}\right)\right\},
\end{align}
\begin{align}\label{bracketbound5}
&\Big|\hat{b}_m(X_s^{j, N, v^{N, *}}, \mu_s^{N, v^{N,*}}, \mathcal{Y}_s^{j,j,N} - R_s^{j,1}, X_s^{k, N, v^{N, *}})\mathcal{Y}_s^{i, j, N} \nonumber \\
& \qquad \qquad \qquad \qquad \qquad \qquad \qquad \qquad -\hat{b}_m\left(X_s^{j, N, v^{N, *}}, \mu_{s}^{N, v^{N, *}}, Y_s^{j, j, N}, X_s^{k, N, v^{N, *}}\right)Y_s^{i, j, N}\Big| \nonumber \\
&\qquad \qquad \leq \left\Vert \hat{b}_m \right\Vert_{\infty} \left|\mathcal{Y}_s^{i,j,N} - Y_s^{i, j, N}\right| + \left\Vert \hat{b}_{my} \right\Vert_{\infty} \left|\mathcal{Y}_s^{j,j,N} - R_s^{j,1} - Y_s^{j, j, N}\right|\left|\mathcal{Y}_s^{i,j,N}\right| \nonumber \\
&\qquad \qquad \leq C\left\{\left|\mathcal{Y}_s^{i,j,N} - Y_s^{i, j, N}\right| + \left(\left|\mathcal{Y}_s^{j,j,N} - Y_s^{j, j, N}\right| + \frac{1}{N}\right)\left(\frac{1}{N} + \delta_{i j}\right)\right\},
\end{align}
and finally
\begin{align}\label{bracketbound6}
&\Big|\hat{b}_y(X_s^{j, N, v^{N, *}}, \mu_s^{N, v^{N,*}}, \mathcal{Y}_s^{j,j,N} - R_s^{j,1})\mathcal{Y}_s^{i, j, N}\mathcal{Z}_{s}^{j, j, k, N} \nonumber \\
& \qquad \qquad \qquad \qquad \qquad \qquad \qquad - \hat{b}_y\left(X_s^{j, N, v^{N, *}}, \mu_{s}^{N, v^{N, *}}, Y_s^{j, j, N}\right)Y_s^{i,j,N}Z_s^{j, j, k, N}\Big| \nonumber \\
&\qquad \leq \left\Vert \hat{b}_y \right\Vert_{\infty} \left|\mathcal{Y}_s^{i,j,N}\mathcal{Z}_s^{j,j,k,N} - Y_s^{i,j,N}Z_s^{j,j,k,N}\right| \nonumber \\
& \qquad \qquad \qquad \qquad \qquad \qquad \qquad + \left\Vert \hat{b}_{yy} \right\Vert_{\infty} \left|\mathcal{Y}_s^{j,j,N} - R_s^{j,1} - Y_s^{j, j, N}\right|\left|\mathcal{Y}_s^{i,j,N}\mathcal{Z}_s^{j,j,k,N}\right| \nonumber \\
&\qquad = \left\Vert \hat{b}_y \right\Vert_{\infty} \bigg|\left(\mathcal{Y}_s^{i,j,N} - Y_s^{i,j,N}\right)\mathcal{Z}_s^{j,j,k,N} + \left(\mathcal{Z}_s^{j,j,k,N} - Z_s^{j,j,k,N}\right)\mathcal{Y}_s^{i,j,N} \nonumber \\
& \qquad\qquad\quad\,\, \qquad \qquad \qquad \qquad \qquad \qquad + \left(\mathcal{Z}_s^{j,j,k,N} - Z_s^{j,j,k,N}\right)\left(Y_s^{i,j,N} - \mathcal{Y}_s^{i,j,N}\right)\bigg|\nonumber \\
& \qquad \qquad \qquad \qquad \qquad \qquad \qquad + \left\Vert \hat{b}_{yy} \right\Vert_{\infty} \left|\mathcal{Y}_s^{j,j,N} - R_s^{j,1} - Y_s^{j, j, N}\right|\left|\mathcal{Y}_s^{i,j,N}\mathcal{Z}_s^{j,j,k,N}\right| \nonumber \\
&\qquad \leq \left\Vert \hat{b}_y \right\Vert_{\infty} \bigg(\left|\mathcal{Y}_s^{i,j,N} - Y_s^{i,j,N}\right|\left|\mathcal{Z}_s^{j,j,k,N}\right| + \left|\mathcal{Z}_s^{j,j,k,N} - Z_s^{j,j,k,N}\right|\left|\mathcal{Y}_s^{i,j,N}\right| \nonumber \\
& \qquad\qquad\qquad\, \qquad \qquad \qquad \qquad \qquad \qquad + \left|\mathcal{Z}_s^{j,j,k,N} - Z_s^{j,j,k,N}\right|\left|Y_s^{i,j,N} - \mathcal{Y}_s^{i,j,N}\right|\bigg) \nonumber \\
& \qquad \qquad \qquad \qquad \qquad \qquad \qquad + \left\Vert \hat{b}_{yy} \right\Vert_{\infty} \left|\mathcal{Y}_s^{j,j,N} - R_s^{j,1} - Y_s^{j, j, N}\right|\left|\mathcal{Y}_s^{i,j,N}\mathcal{Z}_s^{j,j,k,N}\right| \nonumber \\
&\qquad  \leq C\bigg\{\left|\mathcal{Y}_s^{i,j,N} - Y_s^{i, j, N}\right|\left(\frac{1}{N} + \delta_{j k}\right) + \left|\mathcal{Z}_s^{j,j,k,N} - Z_s^{j, j, k, N}\right|\left(\frac{1}{N} + \delta_{i j}\right) \nonumber \\
& \qquad\qquad\quad\, \qquad \qquad \qquad \qquad \qquad \qquad + \left|\mathcal{Z}_s^{j,j,k,N} - Z_s^{j,j,k,N}\right|\left|Y_s^{i,j,N} - \mathcal{Y}_s^{i,j,N}\right| \nonumber \\
&\qquad \qquad \qquad \qquad \qquad \qquad \qquad + \left(\left|\mathcal{Y}_s^{j,j,N} - Y_s^{j, j, N}\right| + \frac{1}{N}\right)\left(\frac{1}{N^2} + \frac{\delta_{i j} + \delta_{j k}}{N} + \delta_{i j}\delta_{j k}\right)\bigg\}.
\end{align}
Plugging \eqref{bracketbound1} - \eqref{bracketbound6} in \eqref{pdyn} we obtain
\begin{align}\label{pbound}
&\left(\mathcal{Y}_t^{i, k, N} - Y_t^{i, k, N}\right)^2 + \sigma^2\int_t^T\sum_{j=1}^N\left(\mathcal{Z}_s^{i, j, k, N} - Z_s^{i, j, k, N}\right)^2ds \nonumber \\
& \qquad \leq C\Bigg\{\int_t^T\left(\mathcal{Y}_s^{i, k, N} - Y_s^{i, k, N}\right)^{2}ds +  \int_t^T\left|\mathcal{Y}_s^{i, k, N} - Y_s^{i, k, N}\right|\left|\mathcal{Y}_s^{k, k, N} - Y_s^{k, k, N}\right|ds \nonumber \\
&\qquad \qquad \qquad + \int_t^T\left|\mathcal{Y}_s^{i, k, N} - Y_s^{i, k, N}\right|\frac{1}{N}\left(\frac{1}{N} + \delta_{i k}\right)ds \nonumber \\
&\qquad \qquad \qquad + \int_t^T\left|\mathcal{Y}_s^{i, k, N} - Y_s^{i, k, N}\right|\left|\mathcal{Y}_s^{i, i, N} - Y_s^{i, i, N}\right|\left(\frac{1}{N} + \delta_{i k}\right)ds \nonumber \\
&\qquad \qquad \qquad + \int_t^T\left|\mathcal{Y}_s^{i, k, N} - Y_s^{i, k, N}\right|\left|\mathcal{Z}_s^{i, i, k, N} - Z_s^{i, i, k, N}\right|ds \nonumber \\
&\qquad \qquad \qquad + \int_t^T\left|\mathcal{Y}_s^{i, k, N} - Y_s^{i, k, N}\right|\sum_{\substack{j = 1 \\ j \neq i}}^N\left|\mathcal{Y}_s^{i, j, N} - Y_s^{i, j, N}\right|\left(\frac{1}{N} + \delta_{j k}\right)ds \nonumber \\
&\qquad \qquad \qquad + \int_t^T\left|\mathcal{Y}_s^{i, k, N} - Y_s^{i, k, N}\right|\sum_{\substack{j = 1 \\ j \neq i}}^N\left|\mathcal{Y}_s^{j, j, N} - Y_s^{j, j, N}\right|\left(\frac{1}{N^2} + \frac{\delta_{j k}}{N}\right)ds \nonumber \\
&\qquad \qquad \qquad + \int_t^T\left|\mathcal{Y}_s^{i, k, N} - Y_s^{i, k, N}\right|\frac{1}{N}\sum_{\substack{j = 1 \\ j \neq i}}^N\left|\mathcal{Z}_s^{j, j, k, N} - Z_s^{j, j, k, N}\right|ds \nonumber \\
&\qquad \qquad \qquad + \int_t^T\left|\mathcal{Y}_s^{i, k, N} - Y_s^{i, k, N}\right|\sum_{j = 1}^N\left|\mathcal{Z}_s^{j, j, k, N} - Z_s^{j, j, k, N}\right|\left|\mathcal{Y}_s^{i, j, N} - Y_s^{i, j, N}\right|ds\Bigg\} \nonumber \\
&\qquad \qquad \qquad - 2\sigma\int_t^T\left(\mathcal{Y}_s^{i, k, N} - Y_s^{i, k, N}\right)\sum_{j=1}^N\left(\mathcal{Z}_s^{i, j, k, N} - Z_s^{i, j, k, N}\right)dW_s^j
\end{align}
Next, we use the inequality $2ab \leq a^2 + b^2$, the Cauchy-Schwartz inequality and some trivial bounds to control the terms that appear in the above if we set $k = i$. Specifically, for every $i \in \{1, 2, \ldots, N\}$ and any $\epsilon > 0$ we obtain 
\begin{equation}\label{AMGM1}
\int_t^T\left|\mathcal{Y}_s^{i, i, N} - Y_s^{i, i, N}\right|\frac{1}{N}ds \leq \frac{1}{2}\int_t^T\left(\mathcal{Y}_s^{i, i, N} - Y_s^{i, i, N}\right)^2ds + \frac{T - t}{2N^2},
\end{equation}
\begin{align}\label{AMGM2}
&\int_t^T\left|\mathcal{Y}_s^{i, i, N} - Y_s^{i, i, N}\right|\left|\mathcal{Z}_s^{i, i, i, N} - Z_s^{i, i, i, N}\right|ds \nonumber \\
& \qquad\qquad\qquad\qquad \leq \frac{1}{2\epsilon}\int_t^T\left(\mathcal{Y}_s^{i, i, N} - Y_s^{i, i, N}\right)^2ds + \frac{\epsilon}{2}\int_t^T\left(\mathcal{Z}_s^{i, i, i, N} - Z_s^{i, i, i, N}\right)^2ds,
\end{align} 
\begin{align}\label{AMGM3}
&\int_t^T\left|\mathcal{Y}_s^{i, i, N} - Y_s^{i, i, N}\right|\sum_{\substack{j = 1 \\ j \neq i}}^N\left|\mathcal{Y}_s^{i, j, N} - Y_s^{i, j, N}\right|\left(\frac{1}{N} + \delta_{j i}\right)ds \nonumber \\
&\qquad\qquad\qquad\qquad = \int_t^T\left|\mathcal{Y}_s^{i, i, N} - Y_s^{i, i, N}\right|\frac{1}{N}\sum_{\substack{j = 1 \\ j \neq i}}^N\left|\mathcal{Y}_s^{i, j, N} - Y_s^{i, j, N}\right|ds \nonumber \\
&\qquad\qquad\qquad\qquad \leq \frac{1}{2}\int_t^T\left(\mathcal{Y}_s^{i, i, N} - Y_s^{i, i, N}\right)^2ds + \frac{1}{2}\int_t^T\frac{1}{N}\sum_{\substack{j = 1 \\ j \neq i}}^N\left(\mathcal{Y}_s^{i, j, N} - Y_s^{i, j, N}\right)^2ds,
\end{align}
\begin{align}\label{AMGM4}
&\int_t^T\left|\mathcal{Y}_s^{i, i, N} - Y_s^{i, i, N}\right|\sum_{\substack{j = 1 \\ j \neq i}}^N\left|\mathcal{Y}_s^{j, j, N} - Y_s^{j, j, N}\right|\left(\frac{1}{N^2} + \frac{\delta_{j i}}{N}\right)ds \nonumber \\
&\qquad\qquad\qquad\qquad = \int_t^T\left|\mathcal{Y}_s^{i, i, N} - Y_s^{i, i, N}\right|\frac{1}{N^2}\sum_{\substack{j = 1 \\ j \neq i}}^N\left|\mathcal{Y}_s^{j, j, N} - Y_s^{j, j, N}\right|ds \nonumber \\
&\qquad\qquad\qquad\qquad \leq \frac{1}{2}\int_t^T\left(\mathcal{Y}_s^{i, i, N} - Y_s^{i, i, N}\right)^2ds + \frac{1}{2}\int_t^T\frac{1}{N^3}\sum_{\substack{j = 1 \\ j \neq i}}^N\left(\mathcal{Y}_s^{j, j, N} - Y_s^{j, j, N}\right)^2ds
\end{align}
and
\begin{align}\label{AMGM5}
&\int_t^T\left|\mathcal{Y}_s^{i, i, N} - Y_s^{i, i, N}\right|\frac{1}{N}\sum_{\substack{j = 1 \\ j \neq i}}^N\left|\mathcal{Z}_s^{j, j, i, N} - Z_s^{j, j, i, N}\right|ds \nonumber \\
&\qquad\qquad\qquad\qquad \leq \frac{1}{2\epsilon}\int_t^T\left(\mathcal{Y}_s^{i, i, N} - Y_s^{i, i, N}\right)^2ds + \frac{\epsilon}{2}\int_t^T\frac{1}{N}\sum_{\substack{j = 1 \\ j \neq i}}^N\left(\mathcal{Z}_s^{j, j, i, N} - Z_s^{j, j, i, N}\right)^2ds,
\end{align}
and since $Y_s^{i,i,N}$ and $\mathcal{Y}_s^{i,i,N}$ are bounded uniformly in $N$ and $i \in \{1, 2, \ldots, N\}$ due to \eqref{bestdecays} and the uniform boundedness of $U_{x_i}^{i,N}$ provided by Assumption~\ref{ass0'}, we have also
\begin{align}\label{AMGM6}
&\int_t^T\left|\mathcal{Y}_s^{i, i, N} - Y_s^{i, i, N}\right|\sum_{j = 1}^N\left|\mathcal{Z}_s^{j, j, i, N} - Z_s^{j, j, i, N}\right|\left|\mathcal{Y}_s^{i, j, N} - Y_s^{i, j, N}\right|ds \nonumber \\
&\qquad\qquad\qquad\qquad = \int_t^T\sum_{j = 1}^N\left|\mathcal{Y}_s^{i, i, N} - Y_s^{i, i, N}\right|\left|\mathcal{Y}_s^{i, j, N} - Y_s^{i, j, N}\right|\left|\mathcal{Z}_s^{j, j, i, N} - Z_s^{j, j, i, N}\right|ds \nonumber \\
&\qquad\qquad\qquad\qquad \leq \int_t^T\sum_{j = 1}^N\frac{1}{2\epsilon}\left(\mathcal{Y}_s^{i, i, N} - Y_s^{i, i, N}\right)^2\left(\mathcal{Y}_s^{i, j, N} - Y_s^{i, j, N}\right)^2ds \nonumber \\
&\qquad\qquad\qquad\qquad\qquad\qquad + \int_t^T\sum_{j = 1}^N\frac{\epsilon}{2}\left(\mathcal{Z}_s^{j, j, i, N} - Z_s^{j, j, i, N}\right)^2ds \nonumber \\
&\qquad\qquad\qquad\qquad \leq \frac{C}{2\epsilon}\int_t^T\sum_{j = 1}^N\left(\mathcal{Y}_s^{i, j, N} - Y_s^{i, j, N}\right)^2ds + \frac{\epsilon}{2}\int_t^T\sum_{j = 1}^N\left(\mathcal{Z}_s^{j, j, i, N} - Z_s^{j, j, i, N}\right)^2ds.
\end{align}
Taking $k = i$ in \eqref{pbound}, using \eqref{AMGM1} - \eqref{AMGM6}, summing for all $i \in \{1, 2, \ldots, N\}$, and finally observing that $Z_s^{i, j, i, N} = Z_s^{i, i, j, N}$ and $\mathcal{Z}_s^{i, j, i, N} = \mathcal{Z}_s^{i, i, j, N}$ for all $i, j \in \{1, 2, \ldots, N\}$ since the derivatives of $U^{i,N}$ and $\mathcal{U}^{i,N}$ in $x_i$ and $x_j$ can be interchanged due to the regularity provided by Assumption~\ref{ass0}, we find that
\begin{align}\label{pbound2}
&\sum_{i=1}^N\left(\mathcal{Y}_t^{i, i, N} - Y_t^{i, i, N}\right)^2 + \left(\sigma^2 - C\epsilon\right)\int_t^T\sum_{i,j=1}^N\left(\mathcal{Z}_s^{i, i, j, N} - Z_s^{i, i, j, N}\right)^2ds \nonumber \\
& \qquad\qquad \leq C\Bigg\{\left(1 + \frac{1}{\epsilon}\right)\int_t^T\sum_{i=1}^N\left(\mathcal{Y}_s^{i, i, N} - Y_s^{i, i, N}\right)^2ds \nonumber \\
& \qquad\qquad\qquad\qquad\qquad + \int_t^T\left(\frac{1}{N} + \frac{1}{\epsilon}\right)\sum_{i,j=1}^N\left(\mathcal{Y}_s^{i, j, N} - Y_s^{i, j, N}\right)^2ds + \frac{1}{N}\Bigg\} \nonumber \\
& \qquad\qquad\qquad\qquad +2\sigma\int_t^T\left(\mathcal{Y}_s^{i, i, N} - Y_s^{i, i, N}\right)\sum_{i,j=1}^N\left(\mathcal{Z}_s^{i, i, j, N} - Z_s^{i, i, j, N}\right)dW_s^j.
\end{align}
The stochastic integrals in the above expression are martingales due to the boundedness of $Z_s^{i,i,j,N}$ and $\mathcal{Z}_s^{i,i,j,N}$ defined in \eqref{zety}, which follows from \eqref{bestdecays} and the boundedness of the functions $U_{x_i x_j}^{i,N}$ provided by Assumption~\ref{ass0'}. Observing also that $Z_s^{i,j}$ and $\mathcal{Z}_s^{i,j}$ defined in \cite{Delarue2018FromTM} are precisely $Y_s^{i,j,N}$ and $\mathcal{Y}_s^{i,j,N}$ defined here respectively, by \cite[Equation (4.19)]{Delarue2018FromTM} we have that 
\begin{equation*}
\int_t^T\mathbb{E}\left[\sum_{i,j=1}^N\left(\mathcal{Y}_s^{i, j, N} - Y_s^{i, j, N}\right)^2ds \, \Bigg| \, \mathcal{F}_t\right] \leq \frac{C}{N}
\end{equation*}
$\mathbb{P}$ - almost surely. Hence, taking expectation given $\mathcal{F}_t$ on \eqref{pbound2}, picking $\epsilon$ to be sufficiently small and performing the transformation $s \rightarrow T - s$, we find that the function
\begin{align}
g^{N}(t) := \left\Vert\sum_{i=1}^N\left(\mathcal{Y}_{T - t}^{i, i, N} - Y_{T - t}^{i, i, N}\right)^2\right\Vert_{L^{\infty}\left(\Omega\right)}
\end{align}
satisfies
\begin{align}\label{finalpbound}
g^{N}(t) \leq C\int_0^tg^{N}(s)ds + \frac{C}{N}
\end{align}
for all $t \in [0, T]$. Then, a simple application of Gr\"onwall's inequality gives $g^{N}(t) \leq \frac{C}{N}e^{Ct}$ for all $t \in [0, T]$, so that $\mathbb{P}$ - almost surely we have
\begin{align}\label{infnormbound}
\sum_{i=1}^N\left(\mathcal{Y}_{t}^{i, i, N} - Y_{t}^{i, i, N}\right)^2 \leq \frac{C}{N}
\end{align}
for all $t \in [0, T]$ and for some deterministic constant $C > 0$. 

By using now the mean value theorem, \eqref{derivrel2}, \eqref{zety}, the triangle inequality, the elementary inequality $(a + b)^2 \leq 2(a^2 + b^2)$ and \eqref{infnormbound}, we can bound
\begin{align}
&\sum_{i=1}^N\Bigg\{\hat{b}\left(X_t^{i, N, v^{N,*}}, \mu_t^{N, v^{N,*}}, U_{x_i}^{i, N}\left(t, X_t^{1, N, v^{N, *}}, X_t^{2, N, v^{N, *}}, \ldots, X_t^{N, N, v^{N, *}}\right)\right) \nonumber \\
& \qquad\qquad\qquad\qquad\qquad - \hat{b}\left(X_t^{i, N, v^{N,*}}, \mu_t^{N, v^{N,*}}, U_{x}\left(t, X_t^{i, N, v^{N,*}}, \mu_t^{N, v^{N,*}}\right) \right)\Bigg\}^2 \nonumber \\
&\qquad\qquad \leq \left\Vert \hat{b}_y\right\Vert_{\infty}^2\sum_{i=1}^N\Bigg\{U_{x_i}^{i, N}\left(t, X_t^{1, N, v^{N, *}}, X_t^{2, N, v^{N, *}}, \ldots, X_t^{N, N, v^{N, *}}\right) \nonumber \\
& \qquad\qquad\qquad\qquad\qquad\qquad\qquad\qquad\qquad\quad\,\,\, -  U_{x}\left(t, X_t^{i, N, v^{N,*}}, \mu_t^{N, v^{N,*}}\right)\Bigg\}^2 \nonumber \\
& \qquad\qquad \leq \left\Vert \hat{b}_y\right\Vert_{\infty}^2\left(\frac{2\left\Vert U_m\right\Vert_{\infty}^2}{N} + 2\sum_{i=1}^N\left(Y_t^{i,i,N} - \mathcal{Y}_t^{i,i,N}\right)^2\right) \nonumber \\
& \qquad\qquad \leq \frac{C}{N}
\end{align}
$\mathbb{P}$-almost surely, so our proof is complete.
\end{proof}

\noindent We proceed now to proving the lemmata necessary for our second main result, i.e for Theorem~\ref{mainresult}.

\begin{proof}[Proof of Lemma \ref{lem2}]
Borrowing the notation from \cite{FOGU15}, for $\gamma, t > 0$ we consider 
\begin{equation}\label{expobound}
\mathcal{E}_{2,\gamma}\left(\mathcal{L}\left(Z_t\right)\right) = \mathbb{E}\left[e^{\gamma Z_t^2}\right]
\end{equation}
and we will first show that this is a finite and bounded in $t \in [0, T]$ quantity for sufficiently small $\gamma > 0$. Using \eqref{lip} and the triangle inequality we get that
\begin{equation}\label{comp1}
\left|Z_t\right| \leq \left|Z_0\right| + \int_0^t\left\{\left|B\left(s, 0, \mathcal{L}\left(Z_s\right) \right)\right| + L\left|Z_s\right|\right\}ds + \sup_{r \in [0, T]}\left|W_r\right| 
\end{equation}
for $t \in [0, T]$, so an application of Gr\"onwall's inequality gives
\begin{equation}
\left|Z_t\right| \leq \left\{\left|Z_0\right| + \int_0^t\left|B\left(s, 0, \mathcal{L}\left(Z_s\right) \right)\right|ds + \sup_{r \in [0, T]}\left|W_r\right| \right\}e^{Lt}
\end{equation}
which can be used along with $$\sup_{r \in [0, T]}|W_r| \leq \sup_{r \in [0, T]}W_r + \sup_{r \in [0, T]}(-W_r)$$ and the Cauchy-Schwartz inequality to give
\begin{align}\label{expobound2}
&\mathcal{E}_{2,\gamma}\left(\mathcal{L}\left(Z_t\right)\right) \leq e^{4\gamma e^{2LT}T\int_0^T B^2\left(s, 0, \mathcal{L}\left(Z_s\right) \right)ds} \nonumber \\
&\qquad\qquad\qquad\qquad \times \sqrt{\mathbb{E}\left[e^{12\gamma e^{2LT}Z_0^2}\right] \mathbb{E}\left[e^{12\gamma e^{2LT}\left(\sup_{r \in [0, T]}W_r \right)^2}\right]\mathbb{E}\left[e^{12\gamma e^{2LT}\left(\sup_{r \in [0, T]}(-W_r) \right)^2}\right]}.
\end{align}
We know now that $\sup_{r \in [0, T]}W_r$ and $\sup_{r \in [0, T]}(-W_r)$ have the same distribution as the absolute value of a Gaussian random variable, so using also \eqref{initexpo} we can deduce that the right-hand side of \eqref{expobound2} is finite for sufficiently small $\gamma > 0$. This gives the desired finiteness and boundedness of $\mathcal{E}_{2,\gamma}\left(\mathcal{L}\left(Z_t\right)\right)$, so by Markov's inequality we have that
\begin{align}\label{concentration2}
\mathbb{P}\left(Z_t > x\right) &= \mathbb{P}\left(e^{\gamma Z_t^2} > e^{\gamma x^2}\right)\nonumber \\
&\leq \mathcal{E}_{2,\gamma}\left(\mathcal{L}\left(Z_t\right)\right)e^{-\gamma x^2} \nonumber \\
&\leq Ce^{-\gamma x^2}
\end{align}
for any $t \in [0, T]$, with $C$ being e.g the right-hand side of \eqref{expobound2} (which does not depend on $t$). Moreover, the finiteness and boundedness of $\mathcal{E}_{2,\gamma}\left(\mathcal{L}\left(Z_t\right)\right)$ allows us to recall \cite[Theorem 1.2]{FOGU15} and deduce that there exist $c, C > 0$ such that
\begin{equation}\label{concentration}
\mathbb{P}\left(\mathcal{W}_1\left(\mu^{N, Z}_t, \mathcal{L}\left(Z_t\right)\right) > x\right) \leq Ce^{-cNx^2}
\end{equation}
for any $t \in [0, T]$. Then, \eqref{mckeanmomentbound} and \eqref{t-wassbound2} are obtained by applying the argument used in the proof of \cite[Lemma 1.4]{HDS15} on \eqref{concentration2} and \eqref{concentration} respectively. We will now bound the moments of $\mathcal{W}_1(\mu^{N, Z^N}_t, \mathcal{L}(Z_t))$ by using a coupling argument. For $p \geq 1$, we have for each $i \in \{1, 2, \ldots, N\}$ and all $t \in [0, T]$:
\begin{align}\label{pest}
\left|\left(Z_t^{i,N} - Z_t^i\right)^p\right| &= \left|p\int_0^t\left(Z_s^{i,N} - Z_s^i\right)^{p-1}\left(B\left(s, Z^{i, N}_{s}, \mu^{N, Z^N}_s \right) - B\left(s, Z^{i}_{s}, \mathcal{L}\left(Z_s\right) \right)\right)ds\right| \nonumber \\
&\leq pL\int_0^t\left|Z_s^{i,N} - Z_s^i\right|^{p-1}\left(\left|Z^{i, N}_{s} - Z^{i}_{s}\right| + \mathcal{W}_1\left(\mu^{N, Z^N}_s, \mathcal{L}\left(Z_s\right)\right)\right)ds \nonumber \\
&\leq pL\int_0^t\left|Z_s^{i,N} - Z_s^i\right|^{p}ds + pL\int_0^t\left|Z_s^{i,N} - Z_s^i\right|^{p-1} \mathcal{W}_1\left(\mu^{N, Z^N}_s, \mu^{N, Z}_s\right)ds \nonumber \\
&\qquad\qquad\qquad\qquad\,\,\,\,\,\,\,\,\,\,\,\quad + pL\int_0^t\left|Z_s^{i,N} - Z_s^i\right|^{p-1} \mathcal{W}_1\left(\mu^{N, Z}_s, \mathcal{L}\left(Z_s\right)\right)ds \nonumber \\
&\leq (3p - 2)L\int_0^t\left|Z_s^{i,N} - Z_s^i\right|^{p}ds + L\int_0^t \mathcal{W}_1^p\left(\mu^{N, Z^N}_s, \mu^{N, Z}_s\right)ds \nonumber \\
&\qquad\qquad\qquad\qquad\qquad\qquad\,\,\,\,\,\quad + L\int_0^t\mathcal{W}_1^p\left(\mu^{N, Z}_s, \mathcal{L}\left(Z_s\right)\right)ds
\end{align}
where we used \eqref{lip}, the triangle inequality for the Wasserstein distance and the elementary inequality $pa^{p-1}b \leq (p-1)a^p + b^p$. Denoting now by $\Pi$ the set of all permutations $\sigma~:~ \{1, 2, \ldots, N\} \mapsto \{1, 2, \ldots, N\}$, it is known that
\begin{align}\label{wassbound}
\mathcal{W}_1^p\left(\mu^{N, Z^N}_s, \mu^{N, Z}_s\right) &\leq \mathcal{W}_p^p\left(\mu^{N, Z^N}_s, \mu^{N, Z}_s\right) \nonumber \\
&= \inf_{\sigma \in \Pi}\frac{1}{N}\sum_{i=1}^N\left|Z_t^{i,N} - Z_t^{\sigma(i)}\right|^p \leq \frac{1}{N}\sum_{i=1}^N\left|Z_t^{i,N} - Z_t^i\right|^p
\end{align}
so taking the average of \eqref{pest} over all $i \in \{1, 2, \ldots, N\}$ we can obtain
\begin{align}\label{pest2}
\frac{1}{N}\sum_{i=1}^N\left|Z_t^{i,N} - Z_t^i\right|^p &\leq (3p - 1)L\int_0^t\frac{1}{N}\sum_{i=1}^N\left|Z_s^{i,N} - Z_s^i\right|^{p}ds + L\int_0^t\mathcal{W}_1^p\left(\mu^{N, Z}_s, \mathcal{L}\left(Z_s\right)\right)ds.
\end{align}
Applying Gr\"onwall's inequality on the above and using \eqref{wassbound} again, we find that
\begin{align}\label{1stGron}
&\mathcal{W}_1^p\left(\mu^{N, Z^N}_s, \mu^{N, Z}_s\right) \leq \frac{1}{N}\sum_{i=1}^N\left|Z_t^{i,N} - Z_t^i\right|^p \leq e^{(3p - 1)LT}\int_0^t\mathcal{W}_1^p\left(\mu^{N, Z}_s, \mathcal{L}(Z_s)\right)ds.
\end{align}
so using again the triangle inequality along with the inequality $(a + b)^p \leq 2^p(a^p + b^p)$ we obtain
\begin{align}\label{almostfinal}
\mathcal{W}_1^p\left(\mu^{N, Z^N}_s, \mathcal{L}(Z_s)\right) &\leq 2^p\mathcal{W}_1^p\left(\mu^{N, Z}_t, \mathcal{L}(Z_t)\right) + 2^p\mathcal{W}_1^p\left(\mu^{N, Z}_t, \mu^{N, Z^N}_t\right) \nonumber \\
&\leq 2^p\mathcal{W}_1^p\left(\mu^{N, Z}_t, \mathcal{L}(Z_t)\right) + 2^pe^{(3p - 1)LT}\int_0^t\mathcal{W}_1^p\left(\mu^{N, Z}_s, \mathcal{L}(Z_s)\right)ds.
\end{align}
The desired result follows by taking expectations on the above and by recalling \eqref{t-wassbound2}
\end{proof}

\begin{proof}[Proof of Lemma \ref{lem3}] 
The result will be immediate if we show that for any fixed $N$ and any choice of the function $H^N: \mathbb{R}^N \mapsto [0, 1]$ we have:
\begin{align}\label{totalvarbound}
\left|\mathbb{E}\left[H^N\left(\mathbf{Z}_t^{N,1}\right)\right] - \mathbb{E}\left[H^N\left(\mathbf{Z}_t^{N,2}\right)\right]\right| \leq \frac{1}{2}\sqrt{\int_0^T\mathbb{E}\left[ \left\Vert\mathbf{B}^{N,2}\left(s, \mathbf{Z}^{N, 2}_{s}\right) - \mathbf{B}^{N,1}\left(s, \mathbf{Z}^{N, 2}_{s}\right)\right\Vert_{2}^2\right]ds}. 
\end{align}
We will first obtain the above when the coefficient functions $\mathbf{B}^{N,k}(\cdot)$ are bounded, in which case the right-hand side is simply the relative entropy of a measure change via Girsanov's theorem that transforms the law of $\mathbf{Z}^{N,1}$ into that of $\mathbf{Z}^{N,2}$. To do that, we define a Radon-Nikodym density process $E^{N} = (E_t^{N})_{t \in [0, T]}$ as
\begin{equation}\label{RND}
E_t^{N} = \exp\left(M_t^{N} - \frac12\langle M^{N} \rangle_t\right)
\end{equation}
for $t \in [0, T]$, where
\begin{equation} \label{eq_MN}
M^{N}_t = \int_0^t \left(\mathbf{B}^{N,2}\left(s, \mathbf{Z}^{N, 1}_{s}\right) - \mathbf{B}^{N,1}\left(s, \mathbf{Z}^{N, 1}_{s}\right)\right) \cdot d\mathbf{W}^N_s.
\end{equation}
By the boundedness of $\mathbf{B}^{N,k}(\cdot)$ and Novikov's condition, the stochastic exponential $(E_t^{N})_{t \in [0,T]}$ is a martingale. By Girsanov's theorem, the process $\tilde{\mathbf{W}}^N = (\tilde{\mathbf{W}}_t^N)_{t \in [0, T]}$ with
\begin{equation}
\tilde{\mathbf{W}}_t^N = \mathbf{W}_t^N - \int_0^t\left(\mathbf{B}^{N,2}\left(s, \mathbf{Z}^{N, 1}_{s}\right) - \mathbf{B}^{N,1}\left(s, \mathbf{Z}^{N, 1}_{s}\right)\right) ds
\end{equation}
is an $N$-dimensional standard Brownian motion under the probability measure $\mathbb{Q}^N$ that satisfies $\frac{d\mathbb{Q}^N}{d\mathbb{P}}\big|_{\mathcal{F}_t} = E_t^{N}$, and we can write \eqref{sys1} as
\begin{align}\label{sys1changed}
\mathbf{Z}_t^{N, 1} = \mathbf{Z}_0^N + \int_0^t\mathbf{A}^{N}\left(s, \mathbf{Z}_s^{N,1}\right)\left(\mathbf{B}^{N,2}\left(s, \mathbf{Z}_s^{N,1}\right)ds + d\tilde{\mathbf{W}}^N_s\right) + \int_0^t\mathbf{C}^{N}\left(s, \mathbf{Z}_s^{N,1}\right)ds
\end{align}
so that $\mathbf{Z}_s^{N,1}$ is the solution to \eqref{sys2} driven by $\tilde{\mathbf{W}}^N$ instead of $\mathbf{W}$. By pathwise uniqueness and thus uniqueness in law of the solution to the SDE \eqref{sys2}, the law of $\mathbf{Z}_t^{N,1}$ under $\mathbb{Q}^N$ coincides with that of $\mathbf{Z}_t^{N,2}$ under $\mathbb{P}$. Then, the relative entropy of the measures $\mathbb{Q}^N$ and $\mathbb{P}$ is given by
\begin{align}\label{KLdivergence}
D_{KL}\left(\mathbb{Q}^N \,\,\, \Vert \,\,\, \mathbb{P}\right) &= \mathbb{E}_{\mathbb{Q}^N}\left[\log\left(\frac{d\mathbb{Q}^N}{d\mathbb{P}}\right)\right] \nonumber \\
&= \mathbb{E}_{\mathbb{Q}^N}\left[M_T^{N} - \frac12\langle M^{N}\rangle_T\right] \nonumber \\ 
&= \mathbb{E}_{\mathbb{Q}^N}\Bigg[\int_0^T \left(\mathbf{B}^{N,2}\left(s, \mathbf{Z}^{N, 1}_{s}\right) - \mathbf{B}^{N,1}\left(s, \mathbf{Z}^{N, 1}_{s}\right)\right) \cdot d\mathbf{W}^N_s  \nonumber \\
& \qquad \qquad \qquad \qquad  - \frac{1}{2}\int_0^T \left\Vert(\mathbf{B}^{N,2}\left(s, \mathbf{Z}^{N, 1}_{s}\right) - \mathbf{B}^{N,1}\left(s, \mathbf{Z}^{N, 1}_{s}\right)\right\Vert_{2}^2 ds\Bigg] \nonumber \\ 
&= \mathbb{E}_{\mathbb{Q}^N}\Bigg[\int_0^T \left(\mathbf{B}^{N,2}\left(s, \mathbf{Z}^{N, 1}_{s}\right) - \mathbf{B}^{N,1}\left(s, \mathbf{Z}^{N, 1}_{s}\right)\right) \cdot d\mathbf{\tilde{W}}^N_s  \nonumber \\
& \qquad \qquad \qquad \qquad  + \frac{1}{2}\int_0^T \left\Vert\mathbf{B}^{N,2}\left(s, \mathbf{Z}^{N, 1}_{s}\right) - \mathbf{B}^{N,1}\left(s, \mathbf{Z}^{N, 1}_{s}\right)\right\Vert_{2}^2 ds\Bigg]
\nonumber \\ 
&= \frac{1}{2}\mathbb{E}_{\mathbb{Q}^N}\left[\int_0^T \left\Vert\mathbf{B}^{N,2}\left(s, \mathbf{Z}^{N, 1}_{s}\right) - \mathbf{B}^{N,1}\left(s, \mathbf{Z}^{N, 1}_{s}\right)\right\Vert_{2}^2 ds\right]  \nonumber \\
&= \frac{1}{2}\mathbb{E}\left[\int_0^T \left\Vert\mathbf{B}^{N,2}\left(s, \mathbf{Z}^{N, 2}_{s}\right) - \mathbf{B}^{N,1}\left(s, \mathbf{Z}^{N, 2}_{s}\right)\right\Vert_{2}^2 ds\right]
\end{align}
Hence, writing $\Vert \mathbb{P} - \mathbb{Q}^N\Vert_{TV}$ for the total variation distance of the probability measures $\mathbb{P}$ and $\mathbb{Q}^N$, since $H^N$ takes values in $[0, 1]$, by Pinsker's inequality we have that
\begin{align}\label{pinsker}
\left|\mathbb{E}\left[H^N\left(\mathbf{Z}_t^{N,1}\right)\right] - \mathbb{E}\left[H^N\left(\mathbf{Z}_t^{N,2}\right)\right]\right| &= \left|\mathbb{E}\left[H^N\left(\mathbf{Z}_t^{N,1}\right)\right] - \mathbb{E}^{\mathbb{Q}^N}\left[H^N\left(\mathbf{Z}_t^{N,1}\right)\right]\right| \nonumber \\
&= \left|\int_{\Omega}H^N\left(\mathbf{Z}_t^{N,1}\right) \cdot (d\mathbb{P} - d\mathbb{Q}^N)\right| \nonumber \\
&\leq \Vert \mathbb{P} - \mathbb{Q}^N\Vert_{TV} \nonumber \\
&\leq \sqrt{\frac{1}{2}D_{KL}\left(\mathbb{Q}^N \,\,\, \Vert \,\,\, \mathbb{P}\right)} 
\end{align}
so when the functions $\mathbf{B}^{N,k}(\cdot)$ are bounded, \eqref{totalvarbound} follows by plugging \eqref{KLdivergence} into \eqref{pinsker}. When the functions $\mathbf{B}^{N,k}$ are not bounded, one can consider the sequences of stopping times $(\tau^{n,k})_{n\in \mathbb{N}}$ with $\tau^{n,k} := \inf\{t \leq T: \Vert\mathbf{Z}_t^{N,k}\Vert_2 \geq n\}$ for $k \in \{1,2\}$, and work with the stopped processes $\mathbf{Z}_t^{N,k,n} := \mathbf{Z}_{t\wedge \tau^{n,k}}^{N,k}$. Then, for every $n \in \mathbb{N}$ and $k \in \{1,2\}$, $(\mathbf{Z}_t^{N,k,n})_{t\in [0,T]}$ satisfies:
\begin{align}\label{sysstopped}
\mathbf{Z}_t^{N, k,n} = \mathbf{Z}_0^N + \int_0^t\mathbf{A}^{N,n}\left(s, \mathbf{Z}_s^{N,k,n}\right)\left(\mathbf{B}^{N,k,n}\left(s, \mathbf{Z}_s^{N,k,n}\right)ds + d\mathbf{W}^N_s\right) + \int_0^t\mathbf{C}^{N,n}\left(s, \mathbf{Z}_s^{N,k,n}\right)ds,
\end{align}
with the coefficients $\mathbf{A}^{N,n}(s,\mathbf{z}) := \mathbf{A}^{N}(s,\mathbf{z})\mathbf{1}_{\{\Vert\mathbf{z}\Vert_2 < n\}}$ and $\mathbf{C}^{N,n}(s,\mathbf{z}) := \mathbf{C}^{N}(s,\mathbf{z})\mathbf{1}_{\{\Vert\mathbf{z}\Vert_2 < n\}}$ which are not necessarily bounded, and the coefficients $\mathbf{B}^{N,k,n}(s,\mathbf{z}) := \mathbf{B}^{N,k}(s,\mathbf{z})\mathbf{1}_{\{\Vert\mathbf{z}\Vert_2 < n\}}$ which are bounded due to the local boundedness of the functions $\mathbf{B}^{N,k}(\cdot)$. Thus, we can obtain \eqref{totalvarbound} for the stopped processes $\mathbf{Z}_t^{N,k,n}$:
\begin{align}\label{totalvarbound2}
&\left|\mathbb{E}\left[H^N\left(\mathbf{Z}_t^{N,1,n}\right)\right] - \mathbb{E}\left[H^N\left(\mathbf{Z}_t^{N,2,n}\right)\right]\right| \nonumber \\
&\qquad \leq \frac{1}{2}\sqrt{\int_0^T\mathbb{E}\left[ \left\Vert\mathbf{B}^{N,2,n}\left(s, \mathbf{Z}^{N,2,n}_{s}\right) - \mathbf{B}^{N,1,n}\left(s, \mathbf{Z}^{N, 2, n}_{s}\right)\right\Vert_{2}^2\right]ds} 
\end{align}
and \eqref{totalvarbound} for the non-stopped processes $\mathbf{Z}_t^{N,k}$ occurs by taking $n \to \infty$ in the above. To take the limit on \eqref{totalvarbound2} we use the dominated convergence theorem on the left-hand side, and for the right-hand side we observe that
\begin{align*}
&\int_0^T\mathbb{E}\left[ \left\Vert\mathbf{B}^{N,2}\left(s, \mathbf{Z}^{N, 2}_{s}\right) - \mathbf{B}^{N,1}\left(s, \mathbf{Z}^{N, 2}_{s}\right)\right\Vert_{2}^2\right]ds \nonumber \\
& \qquad \qquad \qquad- \int_0^T\mathbb{E}\left[ \left\Vert\mathbf{B}^{N,2,n}\left(s, \mathbf{Z}^{N,2,n}_{s}\right) - \mathbf{B}^{N,1,n}\left(s, \mathbf{Z}^{N, 2, n}_{s}\right)\right\Vert_{2}^2\right]ds\nonumber \\
& \qquad = \mathbb{E}\left[\int_{\tau^{n,2}}^T \left\Vert\mathbf{B}^{N,2}\left(s, \mathbf{Z}^{N,2}_{s}\right) - \mathbf{B}^{N,1}\left(s, \mathbf{Z}^{N, 2}_{s}\right)\right\Vert_{2}^2ds\right]
\end{align*}
which vanishes as $n \to \infty$ by the monotone convergence theorem, since it is clearly finite by \eqref{momentcond1} and since $\tau^{2,n} \uparrow T$ almost surely as $n \to \infty$ due to $\mathbf{Z}^{N, 2}$ having continuous paths on $[0, T]$.
\end{proof}
\noindent We are now ready to prove our second main result.
\begin{proof}[Proof of Theorem~\ref{mainresult}]
We show first that the function in the drift terms of \eqref{approxsys2}, i.e
\begin{align}\label{driftfunctionapprox}
b^*(t, x, \tilde{m}) &:= \hat{b}\left(x, \mathcal{L}(X_s^{v^{*}}), U_{x}\left(s, x, \mathcal{L}(X_s^{v^{*}})\right)\right) \nonumber \\
&\qquad \qquad+ \int_{\mathbb{R}}\frac{\delta}{\delta m}\left\{\hat{b}\left(x, m, U_{x}\left(s, x, m\right)\right)\right\}(z_1) \Big|_{m = \mathcal{L}(X_s^{v^{*}})} (\tilde{m} - \mathcal{L}(X_s^{v^{*}}))(dz_1),
\end{align}
has bounded derivatives in both $x$ and $\tilde{m}$. For the boundedness of $b^*_x(t,x,\tilde{m})$, it suffices to show that both summands on the right-hand side of \eqref{driftfunctionapprox} have a bounded derivative in $x$, where for the first summand we have  
\begin{align}
\left|\frac{\partial}{\partial x}\hat{b}\left(x, \mathcal{L}(X_s^{v^{*}}), U_{x}\left(s, x, \mathcal{L}(X_s^{v^{*}})\right)\right)\right| \leq \left\Vert \hat{b}_x\right\Vert_{\infty} + \left\Vert \hat{b}_y\right\Vert_{\infty} \left\Vert U_{xx}\right\Vert_{\infty}
\end{align}
whose right-hand side is finite by Assumption~\ref{ass0}, and because the total mass of both $\tilde{m}$ and $\mathcal{L}(X_s^{v^{*}})$ on $\mathbb{R}$ is equal to $1$, the boundedness of the derivative in $x$ of the second summand is derived from:
\begin{align}\label{neededtwice}
&\left|\frac{\partial }{\partial x}\left\{\frac{\delta}{\delta m}\left\{\hat{b}\left(x, m, U_{x}\left(t, x, m\right)\right)\right\}(z_1)\right\}\right| \nonumber \\
&\qquad \qquad = \Bigg|\frac{\delta \hat{b}_x}{\delta m}\left(x, m, U_{x}\left(t, x, m\right), z_1\right) + \frac{\delta \hat{b}_y}{\delta m}\left(x, m, U_{x}\left(t, x, m\right), z_1\right)U_{xx}\left(t, x, m\right) \nonumber \\
&\qquad \qquad \qquad \qquad +\left(\hat{b}_{y x}\left(x, m, U_{x}\left(t, x, m\right)\right) +\hat{b}_{y y}\left(x, m, U_{x}\left(t, x, m\right)\right)U_{xx}\left(t, x, m\right)\right)\frac{\delta U_x}{\delta m}\left(t, x, m, z_1\right) \nonumber \\
&\qquad \qquad \qquad \qquad + \hat{b}_{y}\left(x, m, U_{x}\left(t, x, m\right)\right)\frac{\delta U_{x x}}{\delta m}\left(t, x, m, z_1\right)\Bigg| \nonumber \\
&\qquad \qquad \leq \left\Vert \left(\frac{\delta \hat{b}}{\delta m}\right)_x \right\Vert_{\infty} + \left\Vert \left(\frac{\delta \hat{b}}{\delta m}\right)_y \right\Vert_{\infty} \cdot \left\Vert U_{xx} \right\Vert_{\infty}\nonumber \\
&\qquad \qquad \quad\qquad \qquad + \left(\left\Vert \hat{b}_{yx} \right\Vert_{\infty} + \left\Vert\hat{b}_{yy} \right\Vert_{\infty} \cdot \left\Vert U_{xx} \right\Vert_{\infty}\right)\cdot \left\Vert \frac{\delta U_{x}}{\delta m} \right\Vert_{\infty} + \left\Vert\hat{b}_{y} \right\Vert_{\infty} \cdot\left\Vert \left(\frac{\delta U_{x}}{\delta m} \right)_x\right\Vert_{\infty}
\end{align}
where the right-hand side is also finite by Assumptions~\ref{ass0} and \ref{ass0-}. Note that for the previous argument we do not need to put the derivative inside the integral, as we can simply approximate it by a difference quotient that can be pushed inside the integral and use then the mean value theorem to bound it by the right hand side of \eqref{neededtwice}. Finally, to bound $|b^*_m(t,x,\tilde{m}, z_1)|$, we observe that it is equal to
\begin{align}\label{neededtwice2}
&\left|\frac{\partial }{\partial z_1}\left\{\frac{\delta}{\delta m}\left\{\hat{b}\left(x, m, U_{x}\left(t, x, m\right)\right)\right\}(z_1)\right\}\right| \nonumber \\
&\qquad \qquad = \left|\hat{b}_m\left(x, m, U_{x}\left(t, x, m\right), z_1\right) + \hat{b}_{y}\left(x, m, U_{x}\left(t, x, m\right)\right)U_{x m}\left(t, x, m, z_1\right)\right| \nonumber \\
&\qquad \qquad \leq \left\Vert \hat{b}_m \right\Vert_{\infty} + \left\Vert \hat{b}_y \right\Vert_{\infty} \cdot \left\Vert U_{xm} \right\Vert_{\infty}
\end{align}
with the right-hand side being finite by Assumption~\ref{ass0}. 

From the boundedness of $b^*_x(t,x,\tilde{m})$ and $b^*_m(t,x,\tilde{m}, z_1)$ which we established above, we deduce that \eqref{approxsys2} admits a unique strong solution and that it satisfies the conditions of Lemma~\ref{lem2}. Indeed, the first is due to the boundedness of $$(b^*(t,x_i,\mu_{\mathbf{x}}^N))_{x_j} = b^*_x(t,x_i,\mu_{\mathbf{x}}^N)\delta_{ij} + \frac{1}{N}b^*_m(t,x_i,\mu_{\mathbf{x}}^N, x_j)$$
for any $i, j \in \{1, 2, \ldots, N\}$, which implies that \eqref{approxsys2} has Lipschitz coefficients in $x_1, x_2, \ldots, x_N$, and the second follows by recalling also (i) of Assumption~\ref{ass0-}. Moreover, the particle system \eqref{approxsys2} is of the form \cite[Equation (1.2)]{KLZ23}, with $A$ being a constant function, $B$ being affine with constant slope in its third argument, $C = 0$ and the function $g$ given by
\begin{equation*}
g\left(t, \tilde{X}_{[0, t]}^{i,N}, \mathbf{y}_{[0, t]}\right) = \frac{\delta}{\delta m}\left\{\hat{b}\left(\tilde{X}_{t}^{i,N}, m, U_{x}\left(t, \tilde{X}_{t}^{i,N}, m\right)\right)\right\}(\mathbf{y}_t)\Big|_{m = \mathcal{L}(X_t^{v^{*}})}
\end{equation*}
so that \cite[Assumption 2.1]{KLZ23} is satisfied automatically.

Before verifying \cite[Assumption 2.2]{KLZ23}, we write the corresponding McKean-Vlasov SDE, i.e. the counterpart of \cite[Equation 1.4]{KLZ23}: 
\begin{align}\label{approxsysmckean}
&\tilde{X}_t^{v^*} = X_0 + \int_0^t\hat{b}\left(\tilde{X}_s^{v^*}, \mathcal{L}(X_s^{v^{*}}), U_{x}\left(s, \tilde{X}_s^{v^*}, \mathcal{L}(X_s^{v^{*}})\right)\right)ds \nonumber \\
&\qquad \qquad \quad + \int_0^t\int_{\mathbb{R}}\frac{\delta}{\delta m}\left\{\hat{b}\left(\tilde{X}_s^{v^*}, m, U_{x}\left(s, \tilde{X}_s^{v^*}, m\right)\right)\right\}(z_1) \Big|_{m = \mathcal{L}(X_s^{v^{*}})} \nonumber \\
&\qquad \qquad \qquad \qquad \qquad \qquad \qquad \qquad \qquad \qquad \qquad \times (\mathcal{L}(\tilde{X}_s^{v^{*}}) - \mathcal{L}(X_s^{v^{*}}))(dz_1)ds + \sigma W_t,
\end{align}
and we claim that this reduces to \eqref{repagent} since $\mathcal{L}(\tilde{X}_t^{v^{*}}) \equiv \mathcal{L}(X_t^{v^{*}})$. Indeed, subtracting \eqref{repagent} from the above we obtain
\begin{align}\label{mckeandiff}
&\tilde{X}_t^{v^*} - X_t^{v^*} = \int_0^t\Big\{\hat{b}\left(\tilde{X}_s^{v^*}, \mathcal{L}(X_s^{v^{*}}), U_{x}\left(s, \tilde{X}_s^{v^*}, \mathcal{L}(X_s^{v^{*}})\right)\right) \nonumber \\
&\qquad \qquad \qquad \qquad \qquad \qquad \qquad - \hat{b}\left(X_s^{v^*}, \mathcal{L}(X_s^{v^{*}}), U_{x}\left(s, X_s^{v^*}, \mathcal{L}(X_s^{v^{*}})\right)\right)\Big\}ds \nonumber \\
&\qquad \qquad \qquad \quad + \int_0^t\int_{\mathbb{R}}\frac{\delta}{\delta m}\left\{\hat{b}\left(\tilde{X}_s^{v^*}, m, U_{x}\left(s, \tilde{X}_s^{v^*}, m\right)\right)\right\}(z_1) \Big|_{m = \mathcal{L}(X_s^{v^{*}})} \nonumber \\
&\qquad \qquad \qquad \qquad \qquad \qquad \qquad \qquad \qquad \qquad \qquad \times (\mathcal{L}(\tilde{X}_s^{v^{*}}) - \mathcal{L}(X_s^{v^{*}}))(dz_1)ds
\end{align}
which implies that
\begin{align}\label{mckeandiffabs}
\left|\tilde{X}_t^{v^*} - X_t^{v^*}\right| &\leq \int_0^t\left(\left\Vert \hat{b}_x\right\Vert_{\infty} + \left\Vert \hat{b}_y\right\Vert_{\infty} \left\Vert U_{xx}\right\Vert_{\infty}\right)\left|\tilde{X}_s^{v^*} - X_s^{v^*}\right|ds \nonumber \\
&\qquad + \int_0^t\left(\left\Vert\hat{b}_{m}\right\Vert_{\infty} + \left\Vert\hat{b}_{y}\right\Vert_{\infty}\left\Vert U_{xm}\right\Vert_{\infty}\right)\mathcal{W}_1\left(\mathcal{L}(\tilde{X}_s^{v^{*}}), \mathcal{L}(X_s^{v^{*}})\right)ds \nonumber \\
&\leq C\int_0^t\left\{\left|\tilde{X}_s^{v^*} - X_s^{v^*}\right| + \mathbb{E}\left[\left|\tilde{X}_s^{v^*} - X_s^{v^*}\right|\right]\right\}ds
\end{align}
so our claim follows by taking expectations and using Gr\"onwall's inequality.

To verify now \cite[Assumption 2.2]{KLZ23}, we write $\mu_t^{N, v^*}$ for the empirical measure of the i.i.d. variables $X_t^{i, v^{*}}$ and $\mu_t^{v^*}$ for their common law, that is
\begin{equation*}
\mu_t^{N, v^*} = \frac{1}{N}\sum_{\ell = 1}^N\delta_{X_t^{\ell, v^{*}}} \quad \text{and} \quad \mu_t^{v^*} = \mathcal{L}(X_t^{v^{*}}) \quad \text{for} \quad t \in [0, T].
\end{equation*}
Then, since the right-hand sides of \eqref{neededtwice} and \eqref{neededtwice2} are finite by Assumptions~\ref{ass0} and \ref{ass0-}, we obtain
\begin{equation*}
\left|\frac{\delta}{\delta m}\left\{\hat{b}\left(X_t^{i, v^{*}}, m, U_{x}\left(t, X_t^{i, v^{*}}, m\right)\right)\right\}(X_t^{j, v^{*}})\Big|_{m = \mathcal{L}(X_t^{v^{*}})}\right| \leq C\left(1 + \left|X_t^{i, v^{*}}\right| + \left|X_t^{j, v^{*}}\right|\right)
\end{equation*}
and also
\begin{equation*}
\left|\int_{\mathbb{R}}\frac{\delta}{\delta m}\left\{\hat{b}\left(X_t^{i, v^{*}}, m, U_{x}\left(t, X_t^{i, v^{*}}, m\right)\right)\right\}(\mathbf{y}_t)\Big|_{m = \mathcal{L}(X_t^{v^{*}})}(\mu_t^{N, v^*} - \mu_t^{v^*})(d\mathbf{y}_t)\right| \leq C\mathcal{W}_1\left(\mu_t^{N, v^*}, \mu_t^{v^*}\right)
\end{equation*}
whose $2p$-moments are bounded by $C^p\Gamma(p)$ and $\frac{C^p}{N^p}\Gamma(p)$ respectively. The latter is obtained by applying Lemma~\ref{lem2} on the system \eqref{approxsys2} which satisfies the required assumptions as we have shown earlier, and it completes the verification of \cite[Assumption 2.2]{KLZ23}. 

By the above, we can apply \cite[Theorem 2.3]{KLZ23} on the system \eqref{approxsys2} with the corresponding McKean-Vlasov SDE being \eqref{repagent}, so that \eqref{evtcond} implies the weak convergence of the point process
\begin{align}
U^{N}\{\tilde{X}_t^{1,N}, \tilde{X}_t^{2,N}, \ldots, \tilde{X}_t^{N,N}\}\left(I\right) := \sum_{i=1}^N \mathbf{1}_{\left\{\left(\frac{i}{N}, \frac{\tilde{X}_t^{i,N} - b_t^N}{a_t^N} \right)\in I\right\}} \quad \text{for} \quad I = [a, b] \times [c, d]
\end{align}
to a Poisson random measure on $(0,1]\times(x_*,x^*]$ with intensity measure $\nu$ given by
\[
\nu((a,b]\times(c,d]) = (b - a) \left((1 + \gamma_t c)^{-\frac{1}{\gamma_t}} - (1 + \gamma_t d)^{-\frac{1}{\gamma_t}}\right) \quad \text{for} \quad a<b \quad \text{and} \quad c<d,
\]
where $x_*,x^*$ are the left and right endpoints of the support of $G_{\gamma_t}$. By repeating the arguments from \cite[Pages 13-14]{KLZ23} which establish equiconvergence for two sequences of point processes via tightness and characterization of their limits on rectangles, we can deduce the same convergence for the point process of the particles $\bar{X}_t^{i,N}$ that satisfy the system \eqref{approxsys1}, that is,
\begin{align}
U^{N}\{\bar{X}_t^{1,N}, \bar{X}_t^{2,N}, \ldots, \bar{X}_t^{N,N}\}\left(I\right) := \sum_{i=1}^N \mathbf{1}_{\left\{\left(\frac{i}{N}, \frac{\bar{X}_t^{i,N} - b_t^N}{a_t^N} \right)\in I\right\}} \quad \text{for} \quad I = [a, b] \times [c, d],
\end{align}
provided that we can establish
\begin{equation}\label{1stlimiteq}
\lim_{N \to \infty}\mathbb{P}\left(U^{N}\{\bar{X}_t^{1,N}, \bar{X}_t^{2,N}, \ldots, \bar{X}_t^{N,N}\}\left(I\right) = n\right) = \lim_{N \to \infty}\mathbb{P}\left(U^{N}\{\tilde{X}_t^{1,N}, \tilde{X}_t^{2,N}, \ldots, \tilde{X}_t^{N,N}\}\left(I\right) = n\right)
\end{equation}
for any rectangle $I$ and $n \in \mathbb{N}$. Then, we can deduce in the same way that the same weak limit is admitted by the point process of the Nash system \eqref{systemNash}, i.e.
\begin{align}\label{nashpp}
&U^{N}\{X_t^{1,N, v^{N,*}}, X_t^{2,N, v^{N,*}}, \ldots, X_t^{N,N, v^{N,*}}\}\left(I\right) \nonumber \\
&\qquad \qquad \qquad \qquad\qquad \qquad \qquad \qquad := \sum_{i=1}^N \mathbf{1}_{\left\{\left(\frac{i}{N}, \frac{X_t^{i,N, v^{N,*}} - b_t^N}{a_t^N} \right)\in I\right\}} \quad \text{for} \quad I = [a, b] \times [c, d],
\end{align}
provided that we can show that
\begin{align}\label{2ndlimiteq}
&\lim_{N \to \infty}\mathbb{P}\left(U^{N}\{X_t^{1,N, v^{N,*}}, X_t^{2,N, v^{N,*}}, \ldots, X_t^{N,N, v^{N,*}}\}\left(I\right) = n\right) \nonumber \\
&\qquad\qquad\qquad\qquad\qquad \qquad \qquad \qquad  = \lim_{N \to \infty}\mathbb{P}\left(U^{N}\{\bar{X}_t^{1,N}, \bar{X}_t^{2,N}, \ldots, \bar{X}_t^{N,N}\}\left(I\right) = n\right)
\end{align}
for any rectangle $I$ and $n \in \mathbb{N}$. Having all the above, the desired result follows by expressing the joint law of the left hand side of \eqref{evtresult} in terms of probabilities that involve the point process \eqref{nashpp}, which is done by repeating the arguments from \cite[Sections 3.1 and 3.3]{KLZ23} with the uncontrolled interacting processes $X^{i,N}$ therein replaced by the Nash states $X^{i,N,v^{N,*}}$ from our setup. Hence, we only need to establish \eqref{1stlimiteq} and \eqref{2ndlimiteq}; the second is obtained by applying Lemma~\ref{lem3} on the systems \eqref{systemNash} and \eqref{approxsys1} with
\begin{equation}\label{HN}
H^N\left(z_1, z_2, \ldots, z_N\right) = \mathbf{1}_{\{U^N\{z_1, z_2, \ldots, z_N\}(I) = n\}}
\end{equation}
for any rectangle $I$ and $n \in \mathbb{N}$, where the condition \eqref{momentcond1} is automatically satisfied by the estimate \ref{expestimate} (given by \cite[Theorem 4.1]{DLR20}, whose assumptions are included in Assumption~\ref{ass0}); on the other hand, \eqref{1stlimiteq} is obtained by applying Lemma~\ref{lem3} on the systems \eqref{approxsys1} and \eqref{approxsys2}, with $H^N$ also given by \eqref{HN}, but this time condition \eqref{momentcond1} needs some extra work to be verified. For the later, we use that the quantity we are bounding consists of squared error terms of a Taylor expansion:
\begin{align}
&\Bigg\{\hat{b}\left(\bar{X}_t^{i,N}, \bar{\mu}_t^{N}, U_{x}\left(t, \bar{X}_t^{i,N}, \bar{\mu}_t^{N}\right) \right) - \hat{b}\left(\bar{X}_t^{i,N}, \mathcal{L}(X_t^{v^{*}}), U_{x}\left(t, \bar{X}_t^{i,N}, \mathcal{L}(X_t^{v^{*}})\right)\right) \nonumber \\
&\qquad\qquad - \int_{\mathbb{R}}\frac{\delta}{\delta m}\left\{\hat{b}\left(\bar{X}_t^{i,N}, m, U_{x}\left(t, \bar{X}_t^{i,N}, m\right)\right)\right\}(z_1) \Big|_{m = \mathcal{L}(X_t^{v^{*}})}(\bar{\mu}_t^N - \mathcal{L}(X_t^{v^{*}}))(dz_1)\Bigg\}^2 \nonumber \\
&\qquad = \Bigg\{\int_0^1\int_0^1\int_{\mathbb{R}}\int_{\mathbb{R}}\frac{\delta^2}{\delta m^2}\left\{\hat{b}\left(\bar{X}_t^{i,N}, m, U_{x}\left(t, \bar{X}_t^{i,N}, m\right)\right)\right\}(z_1, z_2) \Big|_{\substack{m = \lambda'\left(\lambda\bar{\mu}_t^N + (1 - \lambda)\mathcal{L}(X_t^{v^{*}})\right) \\ + (1 - \lambda')\mathcal{L}(X_t^{v^{*}})}} \nonumber \\
&\qquad\qquad\qquad\qquad\qquad\qquad\qquad\qquad\qquad \times \lambda(\bar{\mu}_t^N - \mathcal{L}(X_t^{v^{*}}))(dz_1)(\bar{\mu}_t^N - \mathcal{L}(X_t^{v^{*}}))(dz_2)d\lambda d\lambda' \Bigg\}^2 \nonumber \\
&\qquad \leq \frac{1}{4}\Bigg\{\left\Vert\hat{b}_{m m}\right\Vert_{\infty} + \left(\left\Vert\hat{b}_{y m}\right\Vert_{\infty} + \left\Vert\left(\frac{\delta\hat{b}}{\delta m}\right)_{yz_1}\right\Vert_{\infty}\right)\left\Vert U_{x m}\right\Vert_{\infty} \nonumber \\
&\qquad\qquad\qquad\qquad\qquad\qquad\qquad\,\,\,\, + \left\Vert\hat{b}_{y y}\right\Vert_{\infty}\left\Vert U_{x m}\right\Vert_{\infty}^2 + \left\Vert\hat{b}_{y}\right\Vert_{\infty}\left\Vert U_{x m m}\right\Vert_{\infty}\Bigg\}^2\times\mathcal{W}_1^4\left(\bar{\mu}_t^N, \mathcal{L}(X_t^{v^{*}})\right)
\end{align}
so \eqref{momentcond1} for the systems \eqref{approxsys1} and \eqref{approxsys2} is obtained by taking the sum of the above bound for $i \in \{1, 2, \ldots, N\}$, taking expectations, and finally recalling again Assumptions~\ref{ass0} and \ref{ass0-} along with the third estimate of Lemma~\ref{lem2} for the system \eqref{approxsys1}.
\end{proof}

\section{Discussion of games with state-dependent volatility}
We conclude this paper with a more analytic discussion of Remark~\ref{ncvol}. Consider the following variation of \eqref{system} where $\sigma$ is a function of the state:
\begin{align}\label{systemncvol}
X_t^{i, N, v^N} = X_0^i + \int_0^t b\left(X_s^{i, N, v^N}, \mu_s^{N, v^N}, v_s^{i, N} \right)ds + \sigma\left(X_s^{i, N, v^N}\right) W_t^i, \qquad t \geq 0.
\end{align}
Assuming that the function $\sigma$ takes values in some compact subinterval of $\mathbb{R}_{+}$ and has bounded derivatives up to order $3$, it is easy to verify that the scaling $X_t^{i, N, v^N} \mapsto S(X_t^{i, N, v^N})$ for
\begin{equation}
S(x) = \int_0^x\frac{dz}{\sigma(z)}
\end{equation}
reduces the Stochastic Differential Game to the form that has already been treated, with the control processes remaining unchanged and the coefficient functions replaced as follows:
\begin{equation*}
\begin{aligned}
b(x, \mu, v) &\mapsto S'\left(S^{-1}(x)\right)b\left(S^{-1}(x), S^{-1}(\mu), v\right) + \frac{1}{2}S''\left(S^{-1}(x)\right)\sigma^2\left(S^{-1}(x)\right) \\
&= \frac{1}{\sigma\left(S^{-1}(x)\right)}b\left(S^{-1}(x), S^{-1}(\mu), v\right) - \frac{1}{2}\sigma'\left(S^{-1}(x)\right) \\
\sigma(x) &\mapsto S'\left(S^{-1}(x)\right)\sigma\left(S^{-1}(x)\right) \equiv 1 \\
f(x, \mu, v) &\mapsto f\left(S^{-1}(x), S^{-1}(\mu), v\right) \\
g(x, \mu) &\mapsto g\left(S^{-1}(x), S^{-1}(\mu)\right)
\end{aligned}
\end{equation*}
where for any measure $\mu \in \mathcal{P}(\mathbb{R})$ we write $S^{-1}(\mu)$ for the measure that maps any interval $I = (a, b)$ to $\mu(S(I))) = \mu(S(a), S(b))$.
Then, it can be seen from the definition of the master field in \cite[Pages 327-329]{CD2} that the function $U$ is replaced as follows:
\begin{equation*}
U(t, x, \mu) \mapsto U(t, S^{-1}(x), S^{-1}(\mu)).
\end{equation*}
It is now easy to verify that our transformation preserves Assumptions~\ref{ass0} and \ref{ass0-}, where for example we have
\begin{equation*}
\frac{\delta }{\delta m}\left\{q\left(S^{-1}(\mu)\right)\right\}(z_1) = \frac{\delta q}{\delta m}\left(S^{-1}(\mu), S^{-1}(z_1)\right)\sigma(z_1)
\end{equation*}
whenever $q$ is a differentiable function of probability measures on $\mathbb{R}$. Now, to extend our result to the system \eqref{systemncvol}, it suffices to show that our transformation preserves the condition \eqref{evtcond}, while the inverse transformation preserves the resulting convergence \eqref{evtresult}. For the first, we pick normalizing sequences $(\tilde{a}_t^N)_{N \in \mathbb{N}}$ and $(\hat{b}_t^N)_{N \in \mathbb{N}}$ with $\hat{b}_t^N = S(b_t^N)$ and $\tilde{a}_t^N = a_t^NS'(b_t^N)$, so by the mean value theorem and the fact that $S(x)$ is strictly increasing, for some sequence of random variables $(\xi_N)_{N \in \mathbb{N}}$ we have
\begin{equation*}
\max_{i \leq N}\frac{S\left(X_t^{i, v^{*}}\right) - \hat{b}_t^N}{\tilde{a}_t^N} = \max_{i \leq N}\frac{X_t^{i, v^{*}} - b_t^N}{a_t^N} + \frac{1}{2}\frac{S''\left(\xi_N\right)}{S'\left(b_t^N\right)}a_t^N\left(\max_{i \leq N}\frac{X_t^{i, v^{*}} - b_t^N}{a_t^N}\right)^2
\end{equation*}
where $a_t^N \to 0$ as $N \to \infty$ when $\gamma_t \leq 0$ (see \cite[Corollary 1.2.4]{HF10}) and $S'(x) = \frac{1}{\sigma(x)}$ is lower bounded and has a bounded derivative. Hence, the weak convergence of the first term on the right-hand side to $G_{\gamma_t}$ implies that the second term on the right-hand side vanishes as $N \to \infty$, in which case the left-hand side converges also to $G_{\gamma_t}$ weakly. To verify that the inverse transformation preserves \eqref{evtresult}, for the $j$-th upper order statistic of the Nash system we use the mean value theorem and the fact that $S^{-1}$ is increasing to write 
\begin{align*}
\frac{X_t^{i_j^N(t), N, v^{N,*}} - b_t^N}{a_t^N} &= \frac{S^{-1}\left(S\left(X_t^{i_j^N(t), N, v^{N, *}}\right)\right) - S^{-1}\left(\hat{b}_t^N\right)}{a_t^N} \nonumber \\
&= \frac{S\left(X_t^{i_j^N(t), N, v^{N, *}}\right) - \hat{b}_t^N}{\tilde{a}_t^N} \nonumber \\
&\qquad + \frac{1}{2}\left(S^{-1}\right)''\left(\tilde{\xi}_N\right)\left(S'\left(b_t^N\right)\right)^2a_t^N\left(\frac{S\left(X_t^{i_j^N(t), N, v^{N, *}}\right) - \hat{b}_t^N}{\tilde{a}_t^N}\right)^2
\end{align*}
for some sequence of random variables $(\tilde{\xi}_N)_{N \in \mathbb{N}}$, where we used that $(S^{-1})'(\hat{b}_t^N) = \frac{1}{S'(b_t^N)} = \frac{a_t^N}{\tilde{a}_t^N}$. Again, since $S'(x) = \frac{1}{\sigma(x)}$ and $(S^{-1})''(x) = \sigma'(S^{-1}(x))\sigma(S^{-1}(x))$ are both bounded, the weak convergence of the first term on the right-hand side to $G_{\gamma_t}$ implies that the second term on the right-hand side vanishes as $N \to \infty$, so the same convergence holds for the left-hand side as well. 

The above computations show that the upper boundedness of $\sigma(x)$ could be relaxed to some slow growth as $x \to \infty$, which would be a more natural assumption that covers a much wider class of games, but this would require both the master field $U$ and the components $\hat{b}$ and $\hat{f}$ of the optimized Hamiltonian to have derivatives that vanish sufficiently fast as their arguments tend to infinity (to ensure a sufficiently fast decay of $a_t^N$ and slow growth of $b_t^N$ and $\tilde{\xi}_N$ as $N \to \infty$).

\section{Acknowledgments} We would like to thank Dan Lacker and an anonymous referee for providing us with valuable comments for improving this paper. 

\bibliography{references}
\bibliographystyle{abbrv}

\end{document}